\newtheorem{theorem}{Theorem}[section]
\newtheorem{proposition}[theorem]{Proposition}
\newtheorem{lemma}[theorem]{Lemma}
\newtheorem{fact}[theorem]{Fact}
\theoremstyle{definition}
\newtheorem{definition}[theorem]{Definition}
\newtheorem{remark}[theorem]{Remark}
\newcommand{\enum}[2]{\begin{enumerate}[\hspace*{0.5cm}#1] #2 \end{enumerate}}
\newcommand{\C}[0]{\mathbb{C}}
\newcommand{\G}[0]{\mathcal{G}}
\newcommand{\Z}[0]{\mathbb{Z}}
\newcommand{\id}{\mathrm{id}}
\newcommand{\GL}{\mathrm{GL}}
\newcommand{\SL}{\mathrm{SL}}
\newcommand{\supp}{\mathrm{supp}}
\newcommand{\link}{\mathrm{link}}
\newcommand{\nin}[0]{\notin}
\newcommand{\precycle}[0]{precycle } 
\newcommand{\sq}[0]{\sqsubset}
\newcommand{\ta}[0]{\tilde{a}}
\newcommand{\R}[0]{R}
\newcommand{\ul}[0]{\underline}
\newcommand{\ut}[0]{\ul{t}}
\newcommand{\gp}[0]{\Gamma\mathcal{G}} 
\newtheorem*{main-thm}{Theorem \ref{main ring version}}
\newtheorem*{complex-corollary}{Corollary \ref{main complex version}}
\newtheorem*{thmA}{Proposition \ref{main lemma}}
\newtheorem{thmx}{Theorem}
\newtheorem{corx}[thmx]{Corollary}
\newtheorem{quex}[thmx]{Question}
\title{Linearity of graph products}
\author{Federico Berlai, Javier de la Nuez}
\address[Federico Berlai, Javier de la Nuez]{Department of Mathematics, UPV/EHU, Barrio Sarriena s/n, 48940, Leioa, Spain}
\email[Federico Berlai]{federico.berlai@gmail.com}
\email[Javier de la Nuez]{javier.delanuez@ehu.eus}
\begin{document}
\begin{abstract}
In this work we prove that, given a simplicial graph $\Gamma$ and a family $\mathcal{G}$ of linear groups over a domain $\R$, the graph product $\Gamma\mathcal{G}$ is linear over $\R[\underline t]$, where $\underline t$ is a tuple of finitely many linearly independent variables. As a consequence we obtain that any graph product of finitely many groups linear over the complex numbers is again a linear group over the complex numbers. This solves an open problem of Hsu and Wise~\cite[Problem 1, p.~258]{hsu.wise} in the case of faithful representations over $\C$.
\end{abstract}
\maketitle

\section{Introduction}

The study of linear groups has deep roots in group theory, and deciding whether a residually finite group is linear, or not, can often be a difficult task.
The first to systematically tackle this problem was Mal'cev~\cite{malcev}, who proved, among other things, that finitely generated linear groups are hopfian.

Over the course of the years, results concerning free products and amalgamated products \cite{M,Minty,pShalen,Wehrfritz}, and HNN extensions~\cite{MRV}, of linear groups have appeared. Shalen \cite{pShalen} in particular proved that amalgamated products of linear groups over $\C$, amalgamated over maximal cyclic subgroups, are linear over $\C( t)$, where $ t$ is an indeterminate, and therefore also linear over $\C$ (compare \cite[Lemma~1.2]{pShalen}, or Lemma \ref{function_field_embedding}), and that the class of linear groups over $\C$ is preserved under taking free products (compare \cite[Theorem~1]{pShalen}). This latter result was already proved by Wehrfritz \cite[Theorem~4]{Wehrfritz}, and was later generalized to faithful representations of free products over domains by Minty \cite{Minty}.

A major contribution to the theory was provided by Lubotzky \cite{Lub}, who proved that a finitely generated group is linear over $\C$ if and only if it has a $p$-congruence structure for almost all primes~$p$, that is, if and only if the group $G$ has a descending chain of finite-index normal subgroups $\{N_i\}_{i\geqslant 0}$ (with $N_0=G$) such that $\bigcap_{i}N_i=\{e_G\}$, with $N_0/N_i$ a finite $p$-group for all $i\geqslant 1$, and with a uniform number of generators for the quotients $N_i/N_j$, for all $j\geqslant i \geqslant 0$. 

\smallskip
In this work we are concerned with linearity of graph products of groups, which are a generalization, considered by Green \cite{green}, of free and direct products.
A graph product is defined in terms of a (simplicial) graph $\Gamma=(V,E)$, that is a graph with no multiple edges and no loops, and a collection of groups $\mathcal{G}=\{G_v\}_{v\in V}$, as the group
\[\Gamma\mathcal{G}:=\langle \ast_{v\in V}G_v\mid [G_v,G_u]=e\quad \forall\,\{v,u\}\in E\rangle.\]
The first two conditions in Lubotzky's characterization require the finitely generated linear groups to be residually $p$-finite, and it is known \cite[Theorem~5.6]{green} that the class of residually $p$-finite groups is preserved by taking graph products. On the other hand, it is not clear how the third condition is affected when taking graph products.

Taking inspiration from the work of Shalen~\cite{pShalen}, we prove the following theorem, giving a positive answer to~\cite[Problem 1, p.~258]{hsu.wise} for linear groups over the complex numbers (compare with Corollary~\ref{main complex version}):
  \begin{thmx}
  	\label{main ring version}\emph{Let $\Gamma=(V,E)$ be a finite graph and $\mathcal{G}=\{G_{v}\}_{v\in V}$ a collection of groups linear over an integral domain $R$. Then $\Gamma\G$ is linear over the ring of polynomials $R[\underline t]$, for some finite tuple of variables $\underline t$.}
  \end{thmx}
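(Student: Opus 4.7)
I follow Shalen's twisting strategy, adapted to the graph-product setting. Fix faithful representations $\rho_v \colon G_v \hookrightarrow \GL_{n_v}(R)$ for each $v \in V$, set $N := \sum_{v \in V} n_v$, and decompose $R^N = \bigoplus_{v \in V} V_v$ with $V_v \cong R^{n_v}$. Extending each $\rho_v$ by the identity on the remaining summands produces $\tilde\rho_v \colon G_v \to \GL_N(R)$; the joint images pairwise commute, giving a faithful representation of the \emph{direct} product $\prod_v G_v$ but not of $\Gamma\G$ whenever $\Gamma$ has a non-edge.

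To correct this, for each $v \in V$ introduce a fresh indeterminate $t_v$ and a nilpotent matrix $E_v$ on $R^N$ supported on the block maps $V_u \to V_v$ for $u \notin \link(v) \cup \{v\}$, vanishing on every other $V_w$ (including $V_v$ itself). Define $A_v := I + t_v E_v \in \GL_N(R[t_v])$ and $\tau(g) := A_v\,\tilde\rho_v(g)\,A_v^{-1}$ for $g \in G_v$. A direct computation gives: $\tau(g)$ acts as $\rho_v(g)$ on $V_v$, as the identity on $V_u$ for every $u \in \link(v)$, and as $x \mapsto x + t_v (I - \rho_v(g))\,E_v x$ on $V_u$ for $u \notin \link(v) \cup \{v\}$, the correction term landing inside $V_v$. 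From this description, the commutation $[\tau(G_u), \tau(G_v)] = 1$ along any edge $\{u,v\} \in E$ is checked block by block: on $V_u$ (resp. $V_v$) one of the maps is the identity while the other acts as the vertex representation; on any $V_w$ with $w \notin \{u,v\}$, the possible corrections of the two maps land inside $V_u$ or $V_v$, where the other map is trivial. Hence $\tau$ extends to a homomorphism $\Gamma\G \to \GL_N(R[\underline{t}])$, with $\underline{t} = (t_v)_{v \in V}$ a finite tuple of variables since $V$ is finite.

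The crucial step is proving injectivity of $\tau$. Every non-trivial element of $\Gamma\G$ admits a reduced form $g = g_{v_1} g_{v_2} \cdots g_{v_k}$ with $g_{v_i} \in G_{v_i} \setminus \{e\}$, such that no pair of syllables with the same label can be brought together by commutations along edges of $\Gamma$. The matrix $\tau(g) = \prod_{i=1}^{k} \tau(g_{v_i})$ is a polynomial in $\underline{t}$; I would mount a Shalen-style ping-pong argument, identifying a highest-degree monomial in the variables corresponding to the non-edge transitions of the word and showing that its matrix coefficient is non-zero, using the faithfulness of each $\rho_v$ together with the fact that, by construction, the twists $A_{v_i}$ fail to preserve the block $V_{v_{i+1}}$ precisely when $\{v_i,v_{i+1}\} \notin E$.

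The principal obstacle is precisely this injectivity step. Reduced expressions in $\Gamma\G$ are not unique up to edge-commutation, so the leading-term analysis must be insensitive to the representative chosen within a commutation class; this seems to require a delicate induction on syllable length coupled with a linear-independence argument ensuring that contributions from distinct non-edge transitions of the word cannot cancel each other.
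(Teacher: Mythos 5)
Your outline is in the same broad spirit as the paper's proof -- twist each vertex representation by a matrix over a polynomial extension so that edges still commute and then prove injectivity by a leading-monomial argument -- but the construction you set up is genuinely different, and the decisive step is missing.

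On the construction: you decompose $R^N$ as a \emph{direct sum} $\bigoplus_v V_v$ and introduce one fresh variable $t_v$ per vertex, twisting by $A_v = I + t_v E_v$ with $E_v$ a nilpotent landing in $V_v$. The paper instead arranges the representations so that their supports $J_v \subseteq \{1,\dots,N\}$ \emph{overlap}, with $\lvert J_u \cap J_v\rvert > 2$ precisely on non-edges and $J_u \cap J_v = \emptyset$ on edges (Definition~\ref{defJV}), and it introduces a variable $t^v_w$ for every \emph{ordered non-edge} $(v,w)$. The twist $T_v$ is built from copies of a fixed $2\times 2$ matrix $D$ placed on the pairs of indices $\{r^1_{v,w}, r^2_{v,w}\} \subset J_u\cap J_v$. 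A preparatory step (Lemma~\ref{non_zero_entries}) further conjugates each $\rho_v$ so that \emph{every} off-diagonal entry of $\lambda_v(g)$ is nonzero for $g \neq e$. Both designs produce a homomorphism $\Gamma\G \to \GL_N(R[\underline t])$ easily, so the difference only becomes important for injectivity.

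That is where the gap is, and you acknowledge it. Concretely, your proposal has three unaddressed problems. First, the correction term $t_v(I-\rho_v(g))E_v$ can vanish on the relevant block if $\rho_v(g)$ acts as the identity on $\operatorname{im}(E_v)$; you give no mechanism preventing this, whereas the paper's Lemma~\ref{non_zero_entries} together with the structure of $D$ forces the degree-four leading coefficient of the twisted entry to survive (Remark~\ref{obs_leading}). Second, \textquotedblleft tracking the highest-degree monomial along the word\textquotedblright\ is genuinely delicate for graph products because a cyclically reduced word can revisit the same vertex group with commutations in between; the paper builds a nontrivial combinatorial layer (paths, cycles, and precycles in $\overline\Gamma$, and the sets $K_n$ and their optimality in Lemmas~\ref{precycle}--\ref{optimality}) precisely to identify which index sequences can carry the top-degree monomial of the trace. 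Third, you must rule out \emph{cancellation} between the many index sequences contributing the same top degree; the paper does this by ordering the degree vectors lexicographically, proving in Proposition~\ref{proposition_uniqueness} that there is a \emph{unique} sequence $\bar k^*$ of $\prec$-maximal degree (keyed to the cycle $K_{\lvert V\rvert}$), and then using Lemma~\ref{monomial domination} to conclude its leading monomial survives. Finally, the paper first reduces to the case where $\overline\Gamma$ is bipartite via Fact~\ref{fact} and Lemma~\ref{reduction}; this reduction, which your outline does not have, is what lets the sign pattern $\sigma_v$ in the definition of $T_v$ work out and makes the degree count in Lemma~\ref{upper bound} go through. In short, your homomorphism is sound and the strategy is recognisably Shalen-type, but the injectivity argument as written is a sketch of intent rather than a proof, and it omits exactly the combinatorial bookkeeping that is the content of Sections~\ref{section cycles}--\ref{section_trace}.
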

Notice that we do not require finite generation for the groups appearing in Theorem~\ref{main ring version}, nor any restriction on the cardinality. Moreover we are not restricting our attention to fields, as our approach works for any (not necessarily commutative) domain.
  
\smallskip  
As an immediate consequence of Theorem~\ref{main ring version}, we deduce:
  \begin{corx}
  	\label{main complex version}\emph{Let $\Gamma=(V,E)$ be a finite graph and $\mathcal{G}=\{G_{v}\}_{v\in V}$ a collection of groups linear over~$\C$. Then $\Gamma\G$ is linear over~$\C$.}
  \end{corx}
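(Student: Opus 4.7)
The plan is to derive this as an immediate consequence of Theorem \ref{main ring version} (which is stated earlier and may be assumed), combined with the standard fact — alluded to in the introduction as Lemma \ref{function_field_embedding} and attributed to Shalen — that linearity over a polynomial ring $\C[\underline t]$ in finitely many variables implies linearity over $\C$ itself.

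First I would instantiate Theorem \ref{main ring version} with the integral domain $R=\C$. This immediately yields a faithful linear representation of $\Gamma\mathcal{G}$ over $\C[\underline t]$ for some finite tuple of indeterminates $\underline t=(t_1,\dots,t_n)$. The only remaining task is to replace the coefficient ring $\C[\underline t]$ by $\C$ while preserving faithfulness and finite dimension of the representation.

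For this, I would proceed in two small steps. Extending scalars along the inclusion $\C[\underline t]\hookrightarrow \C(\underline t)$ into the fraction field yields a faithful linear representation of $\Gamma\mathcal{G}$ over $\C(\underline t)$ of the same dimension. Then, using that $\C$ is algebraically closed and has transcendence degree equal to the continuum over $\mathbb Q$, one can choose $n$ elements $c_1,\dots,c_n\in\C$ that are algebraically independent over the (at most countable) subfield generated by the matrix entries appearing in the representation. Specializing $t_i\mapsto c_i$ defines an embedding of the subfield of $\C(\underline t)$ generated by those entries together with $\underline t$ into $\C$, and composing with the representation produces the desired faithful linear representation of $\Gamma\mathcal{G}$ over $\C$. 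This is precisely the content of the lemma referenced as Lemma \ref{function_field_embedding}, so in a polished write-up I would simply invoke it.

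There is no real obstacle here: the heavy lifting is entirely in Theorem \ref{main ring version}, and the corollary amounts to a routine change of coefficient ring. The only thing worth being careful about is that the ambient matrix ring remains of the same finite size throughout the two specializations, so that faithfulness of the representation of $\Gamma\mathcal{G}$ over $\C$ is genuinely witnessed in $\GL_d(\C)$ for the same $d$ produced by Theorem \ref{main ring version}.
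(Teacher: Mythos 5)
Your proof is correct and follows exactly the paper's route: apply Theorem \ref{main ring version} with $R=\C$ to get a faithful representation over $\C[\underline t]$, then invoke Lemma \ref{function_field_embedding} (Shalen's embedding of $\C(\underline t)$ into $\C$) to land in $\GL_d(\C)$. One small caveat if you ever unfold the lemma rather than cite it: the subfield generated by the matrix entries need not be \emph{countable} when the vertex groups are uncountable (the paper explicitly allows this), but the usual transcendence-degree argument via $\operatorname{trdeg}(\C/\mathbb{Q})=2^{\aleph_0}$ does not rely on countability, so the conclusion is unaffected.
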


A group $G$ is called \emph{equationally noetherian} if the the set of solutions of any system $\mathcal{S}$ of equations over $G$ is equal to the set of solutions of a finite subsystem $\mathcal{S}_0\subseteq \mathcal{S}$. The terminology was introduced in \cite{BMR}, although the notion goes back to Bryant \cite{B}.
Groups that are linear over noetherian commutative unitary rings are equationally noetherian \cite[Theorem~B1]{BMR}. In particular, any group linear over a field is.
Therefore, from Theorem~\ref{main ring version} we deduce:
\begin{corx}\label{main noetherian version}
  	\emph{Let $\Gamma=(V,E)$ be a finite graph and $\mathcal{G}=\{G_{v}\}_{v\in V}$ a collection of groups linear over a commutative integral domain $\R$. Then $\Gamma\G$ is an equationally noetherian group.}
\end{corx}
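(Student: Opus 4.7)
The corollary is essentially a one-line deduction from Theorem~\ref{main ring version} combined with the theorem of Baumslag-Myasnikov-Remeslennikov (\cite[Theorem B1]{BMR}), already recalled in the paragraph preceding its statement: linear groups over noetherian commutative unitary rings are equationally noetherian. My plan is therefore to apply Theorem~\ref{main ring version} and then invoke this result, after handling one minor subtlety regarding noetherianity of the coefficient ring.

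The subtlety is that the hypothesis of the corollary only assumes $R$ to be a commutative integral domain, not a noetherian one, so the polynomial ring $R[\underline t]$ produced by Theorem~\ref{main ring version} need not be noetherian either, and \cite[Theorem B1]{BMR} cannot be applied directly to it. The fix is to first replace $R$ by its field of fractions $K=\mathrm{Frac}(R)$: composing each faithful representation $G_v\hookrightarrow \GL_{n_v}(R)$ with the inclusion $R\hookrightarrow K$ shows that every $G_v$ is linear over the field $K$, so the hypotheses of Theorem~\ref{main ring version} are satisfied with $R$ replaced by $K$. Applying the theorem then yields a faithful representation of $\Gamma\G$ over $K[\underline t]$ for some finite tuple $\underline t$.

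Since $K$ is a field, the Hilbert basis theorem (applied once per variable in $\underline t$) ensures that $K[\underline t]$ is a commutative noetherian unitary ring. Thus $\Gamma\G$ is linear over such a ring, and \cite[Theorem B1]{BMR} gives at once that $\Gamma\G$ is equationally noetherian. There is really no obstacle specific to the corollary; all the genuine content has been absorbed into Theorem~\ref{main ring version}, and the only bookkeeping step is the passage to $\mathrm{Frac}(R)$ to secure the noetherian hypothesis required by \cite{BMR}.
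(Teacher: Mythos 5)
Your proposal is correct and takes essentially the same route the paper has in mind: apply Theorem~\ref{main ring version} together with \cite[Theorem~B1]{BMR}, using a passage to a field of fractions to dispose of the fact that $R$ (hence $R[\underline t]$) need not be noetherian. The only cosmetic difference is that you pass to $\mathrm{Frac}(R)$ before invoking the theorem and then use the Hilbert basis theorem on $K[\underline t]$, whereas the paper's remark (``any group linear over a field is'') suggests applying the theorem over $R$ and then embedding $R[\underline t]$ into its field of fractions; both resolve the same subtlety equivalently.
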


This should be compared with \cite[Theorem~E]{Valiunas}, where it is proved that a graph product of equationally noetherian groups is equationally noetherian, whenever the girth of the graph (that is, the minimal length of any induced cycle) is at least five. In Corollary~\ref{main noetherian version} the restriction on the girth of the graph $\Gamma$ is not present, but the hypotheses on the starting groups are more restrictive: not all equationally noetherian groups are linear.

\medskip
Our methods, as well as the ones used to tackle amalgamated products of linear groups \cite{M, pShalen, Wehrfritz}, rely on trascendental extensions $R[\underline t]$ of the domain $R$, and therefore no conclusion can be drawn on the linearity over the domain $R$ itself. In particular, no result on linearity over the integers can be deduced by these methods.

On the other hand, HNN extensions of free abelian groups \cite{MRV} are linear over $\Z$. Moreover, it is known that graph products of some specific families of linear groups are again linear over~$\Z$: any graph product of subgroups of Coxeter groups \cite[Theorem~3.2]{hsu.wise}, and in particular any right-angled Artin group \cite{humphries} is linear over~$\Z$.

Thus, it is natural to ask (specializing \cite[Problem 5.1]{hsu.wise} to linearity over the integers):
\begin{quex}
Do finite graph products preserve linearity over $\Z$? Is the group $\SL_n(\Z)\ast \SL_n(\Z)$ linear over $\Z$?
\end{quex}

\subsubsection*{Acknowledgements}
Javier de la Nuez is, and Federico Berlai was, supported by the ERC grant PCG-336983, Basque Government Grant IT974-16, and Ministry of Economy, Industry and Competitiveness of the Spanish Government Grant MTM2017-86802-P. Federico Berlai is now supported by the Austrian Science Foundation FWF, grant no.~J4194. We are very grateful to Montserrat Casals Ruiz and Ilya Kazachkov for inspiring discussions over the topic of this work.

\section{Preliminaries}
  In this preliminary section, we collect known results that will be useful for our proof, and we establish notation.

Throughout this paper, a domain is a (not necessarily commutative) non-zero unitary ring without zero divisors, that is a non-zero unitary integral domain. The neutral element of a group $G$ is denoted by $e_G$, or by $e$ if the group is clear from the context.
Given a graph product $\Gamma\mathcal{G}$, the groups in $\mathcal{G}$ are called \emph{vertex groups}. An element $g\in \Gamma\mathcal{G}$ is called \emph{parabolic} if its conjugacy class intersects one of the vertex groups.

  We say that a graph $\Gamma=(V_\Gamma,E_\Gamma)$ is \emph{bipartite} if there exists a partition $V_\Gamma=V_0\sqcup V_1$ of the vertex set $V_\Gamma$ given by non-empty, disjoint subsets $V_0$ and $V_1$, such that any edge in $E_\Gamma$ connects a vertex in $V_0$ with one in $V_1$.

\smallskip
The \emph{complement graph} $\overline \Gamma=(V_{\overline\Gamma},E_{\overline\Gamma})$ is the graph whose vertex set is $V_{\overline\Gamma}=V_\Gamma$, and $\{v,u\}\in E_{\overline\Gamma}$ if and only if $\{v,u\}\notin E_\Gamma$.
  Given a subset $S\subseteq V_\Gamma$, the \emph{induced subgraph} on $S$ is the graph $\Delta$ such that $V_\Delta=S$, and there exists an edge $\{u_1,u_2\}\in E_\Delta$ if and only if there exists an edge $\{u_1,u_2\}\in E_\Gamma$.
  
  \begin{fact}\label{fact}
  	Any (finite) graph has a bipartite $2$-sheeted graph covering.
  \end{fact}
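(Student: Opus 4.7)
The plan is to give the standard ``bipartite double cover'' (sometimes called the Kronecker double cover or the tensor product $\Gamma \times K_2$) and check its properties directly. Concretely, I would define $\tilde\Gamma = (\tilde V, \tilde E)$ by setting
\[\tilde V := V_\Gamma \times \{0,1\},\]
and declaring $\{(u,i),(v,j)\} \in \tilde E$ exactly when $\{u,v\}\in E_\Gamma$ and $i\neq j$. The natural projection $p\colon \tilde\Gamma\to\Gamma$ sending $(v,i)\mapsto v$ is the candidate covering map.

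The verification splits into two routine checks. First, bipartiteness: by construction every edge of $\tilde\Gamma$ joins a vertex of $V_\Gamma\times\{0\}$ with one of $V_\Gamma\times\{1\}$, so the partition $\tilde V = (V_\Gamma\times\{0\}) \sqcup (V_\Gamma\times\{1\})$ witnesses bipartiteness (assuming $\Gamma$ has at least one edge, otherwise one can take the trivial bipartition, or note the statement is vacuous up to relabelling). Second, the covering property: each fibre $p^{-1}(v) = \{(v,0),(v,1)\}$ has exactly two elements, and for each $v\in V_\Gamma$ the map $p$ induces, for $i\in\{0,1\}$, a bijection between the edges of $\tilde\Gamma$ incident to $(v,i)$ and the edges of $\Gamma$ incident to $v$, since an edge $\{u,v\}\in E_\Gamma$ lifts uniquely at $(v,i)$ to $\{(v,i),(u,1-i)\}$. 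This is exactly the local isomorphism condition defining a $2$-sheeted graph covering.

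There is essentially no obstacle: the construction is explicit and the combinatorial checks are immediate. The only mild point worth mentioning is that $\tilde\Gamma$ need not be connected—indeed, if $\Gamma$ is already bipartite with parts $A,B$, then $\tilde\Gamma$ decomposes as the disjoint union of two copies of $\Gamma$ (swapping $A\times\{0\}\cup B\times\{1\}$ with $A\times\{1\}\cup B\times\{0\}$)—but the statement of the fact only requires a $2$-sheeted covering, not a connected one, so this is harmless. Finiteness of $\Gamma$ is preserved since $|\tilde V| = 2|V_\Gamma|$ and $|\tilde E| = 2|E_\Gamma|$.
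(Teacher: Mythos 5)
The paper states this as a known \emph{Fact} and offers no proof of its own, so there is nothing to compare against directly; your construction is the standard one (the bipartite double cover $\Gamma \times K_2$, i.e.\ the Kronecker cover) and is correct. The two checks you carry out — bipartiteness via the natural partition $V_\Gamma\times\{0\}\sqcup V_\Gamma\times\{1\}$, and the local edge bijection making the projection a $2$-sheeted cover — are exactly what is needed. One tiny simplification: your parenthetical about $\Gamma$ possibly having no edges is unnecessary, since the partition into the two copies of $V_\Gamma$ is non-empty whenever $V_\Gamma$ is, regardless of whether $E_\Gamma$ is empty; vacuous edge sets pose no problem for the definition of bipartiteness used in the paper. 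Your closing remark that the cover need not be connected is well taken and worth keeping, since \cref{reduction} (which consumes this fact) only requires a covering with finite fibers, not a connected one.
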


Given subsets $V_{0}$ and $V_{1}$ of $V$, we say that $V_{0}$ and $V_{1}$ are orthogonal if $V_{0}\cap V_{1}=\emptyset$ and for any $v_{0}\in V_{0}$ and $v_{1}\in V_{1}$ we have $\{v_{0},v_{1}\}\in E$.
  
  Given a subset of vertices $W\subseteq V$, we write $\Gamma\G_{\restriction W}$ for the subgroup of $\Gamma\mathcal{G}$ generated by $\bigcup_{w\in W}G_{w}$. It is isomorphic
  to $\Gamma_{\restriction W}\G_{\restriction W}$, where $\G_{\restriction W}=\{G_{w}\}_{w\in W}$, and $\Gamma_{\restriction W}$ is the subgraph of $\Gamma$ induced by the vertices $W$.
  	Notice that if $V_{0},V_{1},\dots, V_{k}\subset V$ are pairwise orthogonal, then
  	$\gp_{\restriction\bigcup_{j=0}^{k}V_{j}}\cong\bigoplus_{j=0}^{k}\gp_{\restriction V_{j}}$.
  
Given a non-trivial element $g\in\Gamma\mathcal{G}$, there is a unique minimal $W\subseteq V$ such that $g\in\gp_{\restriction W}$. We refer to $W$ as the \emph{support} of $g$, denoted by $\supp(g)$.
The element $g=g_{1}g_{2}\cdots g_{k}$, where $g_{i}\in G_{v_{i}}\setminus\{e_{G_i}\}$ for $1\leq i\leq k$, is called \emph{reduced} if there do not exist $1\leq i_{0}<j_{0}\leq k$ such that $v_{i_{0}}=v_{j_{0}}$ and $g_{j}$ commutes with $g_{i_0}$ for all $i_{0}<j<j_{0}$. This is a property of the word $g_1\dots g_k$ representing the element $g$, and not of the element itself. With a slight abuse of notation, whenever considering a reduced element, we will always implicitly consider an element and a reduced word representing it.  

We say that $g$ is \emph{cyclically reduced} if $\supp(g)\subseteq \supp(g^{h})$ for all $h\in\Gamma\mathcal{G}$. Every element $g\in\Gamma\mathcal{G}$ admits some cyclically reduced conjugate.
Any element $g\in\gp\setminus\{e\}$ is cyclically reduced if and only if it can be represented by a cyclically reduced word \cite[Definition~3.14]{green} (and hence all reduced words representing it are cyclically reduced).

\smallskip
We will need the following definition \cite[Section 2.10]{Goda}:
  \begin{definition}\label{definition_block}
  	An element $w$ of a graph product $\Gamma\mathcal{G}$ is a \emph{block} if the induced subgraph of $\bar \Gamma$ by $\supp(w)$ is connected. This happens if and only if the set of vertices $\supp(w)\subseteq\Gamma$ cannot be partitioned into pairwise orthogonal (in $\Gamma$), non-empty subsets.
  	
For any element $g\in\gp$, if $V_{1},V_{2},\dots, V_{k}$ are the sets of vertices of the connected components of $\supp(g)$ in $\bar{\Gamma}$, then $g$ can be expressed as a product of $k$ blocks $g=b_{1}b_{2}\dots b_{k}$, where $\supp(b_{j})=V_{j}$ for $1\leq j\leq k$. We refer to this as the \emph{block decomposition} of $g$. It can be shown that this expression is unique up to permutation. Notice that $b_{i}b_j=b_jb_i$ in $\Gamma\mathcal{G}$ for all $i\neq j=1,\dots,d$.
\end{definition}

\begin{lemma}\label{reduction}
  	Let $\Gamma$ and $\Delta$ be graphs, and let $\G=\{G_{v}\}_{v\in V_\Gamma}$ and $\mathcal{H}=\{H_{v}\}_{v\in V_\Delta}$ be families of groups.
  	Let $f\colon\bar{\Gamma}\to\bar{\Delta}$ be a graph covering with finite fibers. If for each pair $(w,v)\in V_\Delta\times V_\Gamma$ with $f(v)=w$ there exists an isomorphism
  	$\phi_{w,v}\colon H_{w}\to G_{v}$, then there is an embedding of $\Delta\mathcal{H}$ into $\Gamma\G$.
  \end{lemma}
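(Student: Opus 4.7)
The plan is to build an embedding $\psi \colon \Delta\mathcal{H} \to \gp$ by ``diagonalising'' each vertex group $H_w$ across its fiber $f^{-1}(w)$, and then verify injectivity via the normal form theorem for graph products.

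First I would record two basic consequences of $f$ being a covering. Two distinct vertices of a single fiber $f^{-1}(w)$ cannot be adjacent in $\bar\Gamma$ (such an edge would map to a loop at $w$), so they are adjacent in $\Gamma$; hence each fiber is an orthogonal set in $\Gamma$. Moreover, for any $\{w_1,w_2\} \in E_\Delta$ (so $\{w_1,w_2\} \notin E_{\bar\Delta}$), any $v_i \in f^{-1}(w_i)$ satisfy $f(v_1) \neq f(v_2)$ and $\{v_1,v_2\} \notin E_{\bar\Gamma}$, so $\{v_1,v_2\} \in E_\Gamma$; thus fibers over adjacent vertices of $\Delta$ are orthogonal in $\Gamma$. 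These observations let me set, for $h \in H_w$,
\[
\psi_w(h) \;:=\; \prod_{v \in f^{-1}(w)} \phi_{w,v}(h),
\]
a well-defined element of $\gp$ since the factors pairwise commute and the product is finite. The second observation ensures that the subgroups $\psi_{w_1}(H_{w_1})$ and $\psi_{w_2}(H_{w_2})$ commute whenever $\{w_1,w_2\} \in E_\Delta$, and the universal property of graph products assembles the $\psi_w$ into a homomorphism $\psi \colon \Delta\mathcal{H} \to \gp$.

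The heart of the argument is injectivity. Given a non-trivial $h \in \Delta\mathcal{H}$, pick a reduced representing word $h = h_1 h_2 \cdots h_n$ with $h_i \in H_{w_i}\setminus\{e\}$, fix an arbitrary ordering on each fiber of $f$, and expand
\[
\psi(h) \;=\; \prod_{i=1}^n \prod_{v \in f^{-1}(w_i)} \phi_{w_i,v}(h_i).
\]
I would show this is a reduced word in $\gp$, forcing $\psi(h)\neq e$ by the normal form theorem. Label the positions by pairs $(i,v)$ with $v \in f^{-1}(w_i)$, so that the letter at $(i,v)$ has support $v$. Two positions with the same support must have $i<i'$ (distinct elements of a single fiber give distinct supports), and then $w_i = f(v) = w_{i'}$. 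Reducedness of $h_1 \cdots h_n$ yields some $i<m<i'$ with $h_m$ not commuting with $h_i$. If $w_m \neq w_i$, then $\{w_i,w_m\} \in E_{\bar\Delta}$; unique lifting along the covering $f$ produces the unique neighbour $v'$ of $v$ in $\bar\Gamma$ with $f(v')=w_m$, and the letter at $(m,v')$ lies strictly between $(i,v)$ and $(i',v)$ with support adjacent to $v$ in $\bar\Gamma$, hence non-commuting. If $w_m = w_i$, the letter at $(m,v)$ lies between them and, since $\phi_{w_i,v}$ is an isomorphism, fails to commute with $\phi_{w_i,v}(h_i)$ because $h_m,h_i$ do not commute in $H_{w_i}$. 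In either case no reduction is possible.

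The main obstacle is precisely this last translation: turning reducedness in $\Delta\mathcal{H}$ into reducedness of the expanded word in $\Gamma\mathcal{G}$. The covering hypothesis is used at exactly the critical point, via unique neighbour lifting, to produce an obstructing letter when the intermediate vertex group differs; the isomorphism assumption on the $\phi_{w,v}$ handles the residual case where the obstruction comes from the same vertex group.
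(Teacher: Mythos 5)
Your proof is correct and follows essentially the same strategy as the paper's: diagonalise each $H_w$ across its fiber, use orthogonality of fibers (and of fibers over $\Delta$-adjacent vertices) for well-definedness and the homomorphism property, and deduce injectivity from the normal form theorem. The only real difference is in the level of detail for injectivity: the paper briefly asserts that the expanded word $\phi_{w_1}(h_1)\cdots\phi_{w_n}(h_n)$ cannot reduce, invoking the equivalence $[\phi(h_i),\phi(h_j)]=e$ iff $[h_i,h_j]=e$, whereas you expand the word into individual $G_v$-syllables indexed by $(i,v)$ and explicitly exhibit, for each pair of positions with equal support, an obstructing syllable — produced by unique edge lifting along the covering $f$ when the intermediate vertex group differs, and by injectivity of $\phi_{w,v}$ when it coincides. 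This is a sharper version of the same argument and makes the role of the covering hypothesis more transparent; it would be a reasonable replacement for the paper's terser final paragraph.
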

  \begin{proof}
  	The maps $\phi_w\colon H_w\to\Gamma\mathcal{G}$ defined as  $\phi_w(h):=\prod_{f(v)=w}\phi_{w,v}(h)$ for all $h\in H_w$ are well-defined injective homomorphisms, and linearly extend to an injective homomorphism $\phi\colon \Delta\mathcal{H}\to\Gamma\G$, so that $\phi(h_1\dots h_n)=\phi_{w_1}(h_1)\dots\phi_{w_n}(h_n)$ where $h_1\dots h_n$ is a reduced expression in $\Delta\mathcal{H}$ and $h_i\in H_{w_i}$ for all $i$.
  	
  	First of all, notice that the maps $\phi_w$ are well defined. Indeed, as the map $f\colon \bar\Gamma\to\bar \Delta$ is a graph covering with finite fibers, we know that, given $w\in V_\Delta$, there are only finitely many $v\in V_{\Gamma}$ such that $f(v)=w$. Moreover, given any two vertices $v,\tilde v\in V_\Gamma$ such that $f(v)=f(\tilde v)=w$, we have that $\{v,\tilde v\}\in E_\Gamma$. Indeed, if this were not the case, as $f$ is a graph covering map we would obtain an edge $\{f(v),f(\tilde v)\}=\{w,w\}\in E_\Delta$, contradicting the fact that the graph $\Delta$ is simplicial. Thus $\langle G_v\mid f(v)=w\rangle\leqslant \Gamma\mathcal{G}$ is the direct product $\prod_{f(v)=w}G_v$, and therefore the image $\phi_w(h)$ is well defined. The homomorphisms $\phi_w$ are injective because the maps $\phi_{w,v}$ are isomorphisms.
  	
  	\smallskip
  	To prove that $\phi$ is a homomorphism we need to prove that whenever $\{w,\tilde w\}\in E_\Delta$ we have that $[\phi(h),\phi(\tilde h)]=e$ for all $h\in H_w$ and $\tilde h\in H_{\tilde w}$. We will prove that these are equivalent conditions. Let $v_1,\dots,v_n$ be the $f$-preimages of the vertex $w$, and $\tilde v_1,\dots,\tilde v_m$ be the $f$-preimages of the vertex $\tilde w$. As $\{w,\tilde w\}\notin E_{\overline \Delta}$, there cannot be any edge in $\overline \Gamma$ connecting a $v_i$ with $\tilde v_j$, because $f$ is a graph-covering map. That is $\{v_i,\tilde v_j\}\in E_\Gamma$ for all $i=1,\dots,n$ and for all $j=1,\dots,m$, and therefore $[\phi(h),\phi(\tilde h)]=e$ for all $h\in H_w$ and $\tilde h\in H_{\tilde w}$.
  	The opposite implication is proved with an analogous reasoning.
  	
  	\smallskip
  	Finally, the homomorphism $\phi\colon \Delta\mathcal{H}\to \Gamma\mathcal{G}$ is injective. Suppose we are given an element $h\in \Delta\mathcal{H}$ such that $\phi(h)=e$, and let $h=h_1\dots h_n$ be a reduced form for $h$, where $h_i\in H_{w_i}$ for all $i$. Therefore $e=\phi(h)=\phi_{w_1}(h_1)\dots\phi_{w_n}(h_n)$, and thus the right-hand side of the previous equation is not reduced. This is a contradiction with the assumption that $h_1\dots h_n$ is reduced, and with the fact that $[\phi(h_i),\phi( h_j)]=e$ in $\Gamma\mathcal{G}$ if and only if $[h_i,h_j]=e$ in $\Delta\mathcal{H}$. Therefore $h$ is the trivial element, and $\phi$ is injective.

  \end{proof}

  \begin{lemma}\label{non_zero_entries}
  	Let $G$ be a linear group over a ring $\R$. 
  	For any $n>1$ sufficiently large there exists an injective homomorphism $\lambda\colon G\to \GL_n(\R[X])$, where $X$ is a free variable over $\R$, such that $\lambda(g)_{i,j}\neq 0$ for all $g\in G\setminus\{e_G\}$ and for all $i\neq j=1,\dots,n$.
  \end{lemma}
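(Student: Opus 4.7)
The plan is to start from an injective linear representation $\rho\colon G\hookrightarrow \GL_m(R)$ provided by linearity of $G$, augment it to $\bar\rho:=\rho\oplus\mathbf{1}\colon G\to \GL_{m+1}(R)$ by adjoining a trivial one-dimensional representation, and then define $\lambda$ as the conjugate of $\bar\rho$ by a suitably chosen matrix $A(X)\in \GL_n(R[X])$ with $n=m+1$.

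The purpose of the trivial padding is to ensure that $\bar\rho(g)$ is never a scalar matrix for $g\ne e_G$: indeed $\bar\rho(g)=cI_{m+1}$ forces $c=1$ (reading off the bottom-right entry), and then $\rho(g)=I_m$, so $g=e_G$. The requirements on $A(X)$ are that $\det A(X)\in R^\times$ (so that conjugation is well defined over $R[X]$) and that, for every pair of indices $i\ne j$, the $R$-linear functional
\[
\phi_{ij}\colon M_n(R)\to R[X],\qquad M\longmapsto \bigl(A(X)^{-1}\,M\,A(X)\bigr)_{ij}=\sum_{k,l}(A^{-1})_{ik}\,A_{lj}\,M_{kl},
\]
has kernel equal precisely to the line of scalar matrices $R\cdot I_n$. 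Granting such an $A(X)$, the map $\lambda(g):=A(X)^{-1}\bar\rho(g)A(X)$ is an injective homomorphism into $\GL_n(R[X])$, and for $g\ne e_G$ the matrix $\bar\rho(g)$ is non-scalar by the previous paragraph, hence does not lie in $\ker\phi_{ij}$; consequently $\lambda(g)_{ij}=\phi_{ij}(\bar\rho(g))\ne 0$ in $R[X]$ for every $i\ne j$, as required.

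The bulk of the work is thus the construction of $A(X)$. For $n=2$ an explicit example is $A(X)=\bigl(\begin{smallmatrix}1+X^2 & X \\ X & 1\end{smallmatrix}\bigr)$, which has determinant $1$ and inverse $\bigl(\begin{smallmatrix}1 & -X \\ -X & 1+X^2\end{smallmatrix}\bigr)$: one computes $\phi_{12}(M)=M_{12}+X(M_{11}-M_{22})-X^2 M_{21}$, visibly vanishing exactly on scalar matrices, and $\phi_{21}$ is handled symmetrically. For general $n$, I would take $A(X)$ to be a product $\prod_{(i,j),\,i\ne j}(I+X^{d_{ij}}E_{ij})$ of elementary unipotent matrices, in some fixed order, with pairwise distinct positive exponents $d_{ij}$ chosen to grow fast enough that the $R$-linear independences among the $n^2$ coefficient polynomials $(A^{-1})_{ik}A_{lj}$ (modulo the single relation $\sum_k(A^{-1})_{ik}A_{kj}=\delta_{ij}$ forced by $A^{-1}A=I$) can be read off from leading monomials in $X$. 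Verifying this linear independence uniformly in $(i,j)$ is the step I expect to be the most delicate; once it is in place, the rest of the argument is routine, and the resulting $\lambda$ meets all the requirements of the lemma.
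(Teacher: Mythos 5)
Your broad strategy — pad $\rho$ with a trivial one‑dimensional summand so that $\bar\rho(g)$ is never scalar for $g\neq e_G$, then conjugate by a product of elementary unipotent matrices $I+X^{d}E_{ij}$ with unit determinant — coincides in outline with the paper's, and the reduction to showing that each $R$‑linear functional $\phi_{ij}(M)=\bigl(A(X)^{-1}MA(X)\bigr)_{ij}$ vanishes only on scalar matrices is a correct reformulation of the goal. The problem is that this reformulation is where the entire content of the lemma lies, and you don't establish it. For general $n$ you assert that fast‑growing exponents $d_{ij}$ will make the $n^2$ coefficient polynomials $(A^{-1})_{ik}A_{lj}$ $R$‑linearly independent modulo the single scalar relation, "readable from leading monomials," but no such argument is given: these coefficients are sums of many products whose leading terms depend on the ordering of the factors in $A(X)$, nothing is said about why one choice of exponents works simultaneously for every ordered pair $(i,j)$, and you yourself flag this as the delicate step. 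The $n=2$ computation is correct but doesn't generalize on its own. As written, the proposal is missing its core verification.

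The paper avoids the global linear‑independence count with an incremental argument, and that is the idea you would need to supply. Writing out the entries of $A^{F_{i,j}(X)}$ for $F_{i,j}(X)=I+XE_{ij}$ shows two things: the set $ND_0(A)$ of off‑diagonal zero positions never grows under such a conjugation, and for any non‑scalar $A$ with $ND_0(A)\neq\emptyset$ there is a pair $(i,j)$ making $ND_0$ strictly shrink (if $A$ is diagonal take $a_{ii}\neq a_{jj}$; otherwise, assuming $n$ large enough, find a row or column containing both a zero and a nonzero off‑diagonal entry and conjugate accordingly). Taking $F(\underline t)=\prod_{i\neq j}F_{i,j}(t_{ij})$ with independent variables and specializing all but one $t_{ij}$ to $0$ then gives $ND_0\bigl(A^{F(\underline t)}\bigr)\subsetneq ND_0(A)$ for every such $A$. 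Iterating $n^2-n$ times with fresh variables forces $ND_0$ to become empty for every non‑scalar $A$ simultaneously, and only at the very end does one substitute suitable powers of a single variable $X$. This monotone bookkeeping via $ND_0$ and the specialization trick replace the leading‑monomial analysis you were hoping for, and it is precisely what is missing from your sketch.
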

  \begin{proof}
  Let $\rho\colon G\to \GL_n(\R)$ be a linear representation of the group $G$ over $\R$, with $n\geqslant 2$. Without loss of generality we can suppose that $\rho(g)$ is not a scalar matrix for all non-trivial $g\in G$. Indeed, if that were not the case, we could replace the representation $\rho$ with $\tilde \rho\colon G\to \GL_{n+1}(\R)$, defined for all $g\in G$ by $\tilde \rho(g)=\bigl(\begin{smallmatrix}1&0\\ 0&\rho(g) \end{smallmatrix}\bigr)$. The same argument allows to assume that $n$ is arbitrary large.
  	
  	For any pair $i\neq j$ of indices in $\{1,\dots,n\}$, let $E_{i,j}$ be the matrix whose only non-zero entry is the entry $(i,j)$, which is defined to be $1$. Let $F_{i,j}(X):=I_n+XE_{i,j}\in GL_{n}(\R[X])$, where $X$ is a free variable over $R$ that commutes with the elements of the ring. Notice that $F_{i,j}(X)^{-1}=F_{i,j}(-X)$ and $F_{i,j}(0)=I$.

  	Given a matrix $A=(a_{r,s})\in \GL_n(\R)$, the conjugation of $A$ by $F_{i,j}(X)$ results in the matrix
  	\begin{equation*}
  		(b_{r,s})_{r,s}:=F_{i,j}(X)^{-1}AF_{i,j}(X)=A^{F_{i,j}(X)}\in \GL_n(\R[X]),
  	\end{equation*}
  	given by
  	\begin{equation}\label{new_coefficients}
  		b_{r,s}=\begin{cases}
  			a_{r,s}&\quad\text{if }r\neq i\text{ and }s\neq j;\\
  			a_{i,s}-a_{j,s}X&\quad\text{if }r= i\text{ and }s\neq j;\\
  			a_{r,j}+a_{r,i}X&\quad\text{if }r\neq i\text{ and }s= j;\\
  			a_{i,j}+(a_{i,i}-a_{j,j})X-{a_{j,i}X^{2}}&\quad\text{if }r= i\text{ and }s= j.
  		\end{cases}
  	\end{equation}
  	Notice that the coefficient $b_{r,s}$ can only be equal to zero if the corresponding $a_{r,s}$ was also equal to zero.
  	In other words, if we denote by $ND_{0}(A)$ the collection of all pairs of indices $(i,j)\in\{1,\dots, n\}^{2}\setminus\{(k,k)\}_{1\leq k\leq n}$ such that $a_{i,j}=0$, then we have $ND_{0}(B)\subseteq ND_{0}(A)$.
  	
  	Pick a collection of independent polynomial variables $\{t_{i,j}\,|\,1\leq i,j\leq n\}$, and define
  	\begin{equation}\label{product}
  		F(\ut):=\prod_{i=1}^n\Bigl(\prod_{\substack{j=1\\j\neq i}}^nF_{i,j}(t_{i,j}) \Bigr)\in\GL_n(R[\ut]).
  	\end{equation}
  	We claim that, given any non-scalar matrix $A\in \GL_n(\R)$,
  	if $ND_{0}(A)\neq\emptyset$
  	then $ND_{0}(A^{F(\underline{t})})\subsetneq ND_{0}(A)$.
  	To begin with, notice that in order to show this it suffices to prove that for any such $A$ there exist distinct $i,j\in\{1,\dots, n\}$ such that $ND_{0}(A^{F_{i,j}(X)})\subsetneq ND_{0}(A)$.
  	Indeed, if we write $\ut$ as the sequence $\ut',t_{i,j},\underline t''$, where $\underline t'$ includes $t_{k,l}$ for all indices $(k,l)$ appearing to the left of $(i,j)$ in the product appearing in Equation \eqref{product}, and $\underline t''$ all those appearing to the right, then $F(0,t_{i,j},0)=F_{i,j}(t_{i,j})$, since $F_{r,s}(0)=I$, and therefore:
  	\[
  	ND_{0}(A^{F(\ut)})\subseteq ND_{0}(A^{F(0,t_{i,j},0)}))=ND_{0}(A^{F_{i,j}(t_{i,j})})\subsetneq ND_{0}(A).
  	\]
  	So let $A\in\GL_n(\R)$ be a non-scalar matrix. Consider first the case in which $A$ is a diagonal matrix. Since $A$ is not scalar, there exist entries $(i,i)$ and $(j,j)$ such that $a_{i,i}\neq a_{j,j}$. Let
  	$B=(b_{r,s}):=A^{F_{i,j}(X)}$. It follows from the fourth line of Equation \eqref{new_coefficients} that $b_{i,j}$ contains a non-zero linear term. In particular $b_{i,j}\neq 0$ and thus $ND_{0}(B)\subsetneq ND_{0}(A)$.
  	
  	If $A$ is not a diagonal matrix and has non-diagonal zero entries, then there exits at least a row, or a column, with a non-diagonal zero entry, and a non-diagonal non-zero entry. Assume without loss of generality that the $j$-th column satisfies this property, and take $a_{i,j}= 0$ and $a_{k,j}\neq 0$ in this column, where $i\neq j$ and $k\neq j$. If $(b_{r,s})_{r,s}=A^{F_{i,k}(X)}$ then
  	by Equation \eqref{new_coefficients} again
  	we have $b_{i,j}=a_{i,j}-a_{k,j}X=a_{k,j}X\neq 0$.
  	
  	Therefore, as claimed, $ND_{0}(A^{F})\subsetneq ND_{0}(A)$. By applying the argument $n^2-n$ times and using a new set of independent variables at each step, we deduce that there exists a matrix $H\in \GL_n(\R[\underline s])$ such that $ND_{0}(\rho(g)^{H})=\emptyset$ for all $g\in G\setminus\{e_G\}$, where $\underline s$ is a tuple of polynomial variables independent over $\R$.
  	
  	\smallskip
  	By replacing the finitely many variables in $\underline{s}$ with appropriate powers of a single variable, we obtain the statement.
  \end{proof}

  The following definition will play a pivotal role in the proof of Theorem \ref{main ring version}.
  \begin{definition}\label{defJV}
  	Let $\Gamma=(V_\Gamma,E_\Gamma)$ be a finite simplicial graph, consider	 a collection of groups $\{G_{v}\}_{v\in V_\Gamma}$ with injective homomorphisms $\lambda_{v}\colon G_v\to \GL_N(\R)$, for $N$ big enough.
  	We say that a family $\{J_{v}\}_{v\in V}$ is a \emph{well-placed collection} of supports for $\{\lambda_{v}\}_{v\in V}$ if the following properties are satisfied:
  	\enum{(a)}{
  		\item\label{conda} $J_{v}\cap J_{w}=\emptyset$ for $\{u,v\}\in E_\Gamma$;
  		\item\label{condb} $\lvert J_{v}\cap J_{w}\rvert>2$ for $\{u,v\}\notin E_\Gamma$;
  		\item\label{condc} $J_{u}\cap J_{v}\cap J_{w}=\emptyset$ for every three distinct $u,v,w\in V_\Gamma$.
  	}
  	Moreover, for all $g\in G_v\setminus\{e\}$ we require that:
  	\enum{(i)}{
  		\item $\lambda_v(g)_{i,j}\neq 0$ if $i,j\in J_{v},i\neq j$;
  		\item $\lambda_v(g)_{i,j}=\delta_{i,j}$ otherwise.
  	}
  \end{definition}
  
  Using Lemma \ref{non_zero_entries}, we can construct well-placed collections of supports for a finite graph product of linear groups. Indeed, given a finite graph $\Gamma=(V,E)$, for any $v\in V$ choose $n_v\geqslant \lvert\link_\Gamma(v)\rvert$ such that there exists an injective homomorphism $\lambda_v\colon G_v\to \GL_{n_v}(\R)$ such that $\lambda_v(g_v)_{i,j}\neq 0$ for all $g_v\in G_v$ and for all $i\neq j$ from $1$ to $n_v$. For $N$ big enough, depending on $n_{v_1},\dots,n_{v_{\lvert\Gamma\rvert}}$, we can choose subsets $J_{v_i}\subseteq \{1,\dots,N\}$, and with this choice replace the injective homomorphism $\lambda_v\colon G_v\to \GL_{n_v}(\R)$ with $\lambda_v\colon G_v\to \GL_{N}(\R)$, so that the constraints of Definition \ref{defJV} are satisfied.

  \medskip
  Let $T$ be an indeterminate, and consider the matrix
  \begin{align*}
  	D=\bigl(d_{i,j}(T)\bigr)_{1\leq i,j\leq 2}=
  	\begin{pmatrix}
  		T^{2} & T+1 \\
  		T-1 & 1
  	\end{pmatrix}
  	\in \SL_{2}(R[T]),
  \end{align*}
  with inverse
  \begin{align*}
  	D'=(d'_{i,j}(T))_{1\leq i,j\leq 2}=
  	\begin{pmatrix}
  		1 & 1-T \\
  		-1-T & T^{2}
  	\end{pmatrix}.
  \end{align*}
  For each unoriented edge $\{u,v\}$ in $\bar{E}$ we distinguish a pair of indices $\{r^{1}_{u,v},r^{2}_{u,v}\}\subset J_{u}\cap J_{v}$.
  Notice that $r^{1}_{u,v}=r^{1}_{v,u}$ and $r^{1}_{u,v}=r^{2}_{v,u}$.
  
  We now expand $\R$ with a transcendental tuple of elements $\ut= \{t_{w}^{v}\mid v\in V,\, w\in \link_{\overline \Gamma}(v)\}$, that is, a tuple of polynomial variables. We impose that the elements in the tuple pairwise commute, and commute with the elements of the domain $R$. Note that $t^{v}_{w}\neq t^{w}_{v}$.
  For each $v\in V$ consider the sub-tuple $\ut^{v}=(t^{v}_{w})_{w\in \link_{\overline\Gamma}(v)}$, and let
  $T_{v}\in\GL_N(\R(\ut^{v}))$ be the 
  matrix defined by
  \begin{equation}\label{definitionTs_blue}
  	(T_v)_{k,k'}:=\begin{cases}
  		1 &\quad\text{if }k=k'\nin \bigcup_{w\in \link_{\overline\Gamma}(v)}\{r^{1}_{v,w},r^{2}_{v,w}\}\\
  		d_{\sigma_{v}(\delta),\sigma_{v}(\epsilon)}(t^{v}_{w}) &\quad\text{if $(k,k')=(r^{\delta}_{v,w},r^{\epsilon}_{v,w})$ for $w\in \link_{\bar{\Gamma}}(v)$ and $\delta,\epsilon\in\{1,2\}$}\\
  		0&\quad\text{otherwise;}
  	\end{cases}
  \end{equation}
  where $\sigma_{v}=\id_{\{1,2\}}$ if $v\in V_{0}$ and $\sigma_{v}=(1\,\,2)$ if $v\in V_{1}$, and $d_{\sigma_{v}(\delta),\sigma_{v}(\epsilon)}(t^{v}_{w})$ is a polynomial entry of the matrix $D$ evaluated at $t^v_w$.

  \begin{lemma}\label{commutation}
  	The maps $\widetilde{\lambda}_v\colon G_v\to \GL_N(\R[\ut])$ sending $g\in G_v$ to $T_{v}\lambda_{v}(g){T_{v}^{-1}}$ extend to a homomorphism $\lambda\colon\gp\to \GL_{N}(\R[\ut])$. Moreover, $\lambda(g)=e$ if and only if $\lambda(b)=e$ for every block $b$ of~$g$.
  \end{lemma}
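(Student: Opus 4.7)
My plan is to reduce everything to a single support claim: each matrix $\widetilde\lambda_v(g) = T_v \lambda_v(g) T_v^{-1}$ is ``supported on $J_v$'', meaning its $(i,j)$ entries outside $J_v \times J_v$ agree with those of the identity matrix. Both the homomorphism extension and the block decomposition statement will then follow quickly.

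For the support claim, the crucial input is condition (c) of Definition \ref{defJV}: for distinct $w, w' \in \link_{\bar\Gamma}(v)$, the pairs $\{r^{1}_{v,w}, r^{2}_{v,w}\}$ and $\{r^{1}_{v,w'}, r^{2}_{v,w'}\}$ are disjoint (both sit in $J_v$, so any overlap would force a triple intersection prohibited by (c)). Thus Equation \eqref{definitionTs_blue} exhibits $T_v$ as a direct sum of $2\times 2$ blocks on the disjoint pairs making up $S_v := \bigcup_w \{r^{1}_{v,w}, r^{2}_{v,w}\} \subseteq J_v$, with the identity on the complement of $S_v$; in particular $T_v$ and $T_v^{-1}$ act trivially on every row and column indexed outside $J_v$. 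Combined with condition (ii) of Definition \ref{defJV}, which gives $\lambda_v(g)_{i,j} = \delta_{i,j}$ off $J_v \times J_v$, a direct entrywise check yields the support claim for $\widetilde\lambda_v(g)$.

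To extend to a homomorphism, fix $\{u,v\} \in E_\Gamma$. By condition (a), $J_u \cap J_v = \emptyset$, so the support claim expresses $\widetilde\lambda_u(g_u)$ and $\widetilde\lambda_v(g_v)$ as block-diagonal matrices with respect to the decomposition $\{1,\dots,N\} = J_u \sqcup J_v \sqcup (\{1,\dots,N\} \setminus (J_u \cup J_v))$, each acting non-trivially on only one block and as the identity on the other two. Such matrices commute, so the $\widetilde\lambda_v$ assemble into a well-defined homomorphism $\lambda\colon \gp \to \GL_N(\R[\ut])$.

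Finally, for the block decomposition assertion, let $g = b_1 \cdots b_k$ be the block decomposition with $V_i := \supp(b_i)$. Since the $V_i$ are the connected components of $\supp(g)$ in $\bar\Gamma$, any two are pairwise orthogonal in $\Gamma$, so condition (a) makes the unions $J_{V_i} := \bigcup_{v \in V_i} J_v$ pairwise disjoint. The support claim, applied to the factors $\widetilde\lambda_v(g_v)$ of each $\lambda(b_i)$, then shows that $\lambda(b_i)$ is supported on $J_{V_i}$. Hence $\lambda(g) = \prod_i \lambda(b_i)$ is a product of matrices with pairwise disjoint supports, and such a product equals the identity if and only if each factor does. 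The main technical subtlety I anticipate is the entrywise verification underpinning the support claim, where the combinatorial conditions (a)--(c) of Definition \ref{defJV} are used in full; once this is in place, the rest is essentially bookkeeping.
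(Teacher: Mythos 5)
Your proof is correct and follows essentially the same route as the paper: both arguments hinge on the observation that $\widetilde\lambda_v(g)$ is supported on $J_v$ (equivalently, acts as the identity on the complementary summand $M_{\overline v}$ of $\R^N$), and both derive the homomorphism extension and the block-decomposition assertion from this together with condition (a) of Definition~\ref{defJV}. Your write-up is somewhat more explicit than the paper's terse proof, in particular in spelling out how condition (c) guarantees that the pairs $\{r^1_{v,w},r^2_{v,w}\}$ are pairwise disjoint (making $T_v$ a genuine direct sum of $2\times 2$ blocks and hence supported on $J_v$), but the underlying idea is identical.
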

  \begin{proof}
  	The group $\GL_N(\R)$ is the group of $\R$-module automorphisms of the free $\R$-module $\R^N$. Let $\{m_i\mid i=1,\dots,N\}$ be a free basis of the $\R$-module $\R^N$. Any set $J_v$ induces a decomposition of $\R^N=M_{\overline v}\oplus M_v$, where $M_v=\{m_j\mid j\in J_v\}$ and $M_{\overline v}=\{m_j\mid j\notin J_v\}$. The element $T_v\in \GL_N(\R[\underline t^v])$ is such that its restriction to $M_{\overline v}$ is the identity of $M_{\overline v}$. Moreover, $\lambda_v(g)$ is also the identity of $M_{\overline v}$ for all $g\in G_v$. Thus $\widetilde \lambda_v(g)=T_v^{-1}\lambda_v(g)T_v$ will also be the identity as an $\R$-module automorphism of~$M_{\overline v}$.
  	
  	Both claims follow from this and from the fact that $J_{v}\cap J_{w}=\emptyset$ whenever $\{v,w\}\in E$ is an edge of the graph~$\Gamma$.
  \end{proof}

  The following proposition implies Theorem \ref{main ring version} of the introduction. Its proof will be provided in Section \ref{section_trace}.
  \begin{proposition}\label{main lemma}
  	Given a graph $\Gamma$ with $\overline \Gamma$ bipartite and any non-parabolic block $g\in\mathcal{G}$, we have that the trace $tr\bigl(\lambda(g)\bigr)$ is not an element of $R$.
  \end{proposition}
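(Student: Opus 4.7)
The plan is to locate a variable $t \in \underline{t}$ such that $tr(\lambda(g)) \in R[\underline{t}]$ has strictly positive degree in $t$.

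By conjugation-invariance of the trace we may assume that $g = g_1 \cdots g_k$ is cyclically reduced, with $g_i \in G_{v_i}$. Since $g$ is a non-parabolic block, $k \geqslant 2$ and $\supp(g)$ is connected in $\bar{\Gamma}$, so we can fix an edge $\{v_*, w_*\} \in \bar{E}$ with both endpoints in $\supp(g)$. Using the bipartite assumption, we may take $v_* \in V_0$ and $w_* \in V_1$. Setting $t := t^{v_*}_{w_*}$, conditions \ref{conda}--\ref{condc} of Definition \ref{defJV} ensure that $t$ appears only inside the $2 \times 2$ block of $T_{v_*}$ (resp.\ $T_{v_*}^{-1}$) at the indices $I := \{r^1, r^2\} := \{r^1_{v_*, w_*}, r^2_{v_*, w_*}\}$, where it is given by $D(t)$ (resp.\ $D^{-1}(t)$). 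In particular, writing $E_{p,q}$ for the matrix unit with a unique $1$ at position $(p,q)$, modulo terms of $t$-degree strictly less than $2$ we have $T_{v_*} \equiv t^2 E_{r^1, r^1}$ and $T_{v_*}^{-1} \equiv t^2 E_{r^2, r^2}$.

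Let $m$ be the number of indices $i$ with $v_i = v_*$, say at positions $i_1 < \cdots < i_m$, and let $B_0, B_1, \ldots, B_m$ be the successive products of factors $T_{v_\ell} \lambda_{v_\ell}(g_\ell) T_{v_\ell}^{-1}$ lying strictly between consecutive appearances of $v_*$ in the cyclic word (so none of the $B_j$'s involves $v_*$, and none of them depend on $t$). Expanding $\lambda(g) = \prod_i T_{v_i} \lambda_{v_i}(g_i) T_{v_i}^{-1}$ in powers of $t$, each factor $T_{v_*} \lambda_{v_*}(g_{i_j}) T_{v_*}^{-1}$ has its unique top-degree (degree $4$) coefficient equal to $\lambda_{v_*}(g_{i_j})_{r^1, r^2}\, E_{r^1, r^2}$, by the identity $E_{r^1, r^1} M E_{r^2, r^2} = M_{r^1, r^2} E_{r^1, r^2}$. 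Combined with cyclic invariance of the trace and $tr(E_{r^1, r^2} M) = M_{r^2, r^1}$, a direct computation shows that the coefficient of $t^{4m}$ in $tr(\lambda(g))$ equals
\begin{equation*}
c \,:=\, \Bigl(\prod_{j=1}^{m} \lambda_{v_*}(g_{i_j})_{r^1, r^2}\Bigr) \cdot \Bigl(\prod_{j=1}^{m-1} (B_j)_{r^2, r^1}\Bigr) \cdot (B_m B_0)_{r^2, r^1},
\end{equation*}
whose first factor is nonzero by property (i) of Definition \ref{defJV}.

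The main obstacle is to show that each $(B_j)_{r^2, r^1}$ and $(B_m B_0)_{r^2, r^1}$ is a nonzero polynomial in the remaining variables. By condition \ref{condc}, neither $r^1$ nor $r^2$ lies in $J_u$ for any $u \neq v_*, w_*$, so within each $B_j$ only the factors with $v_\ell = w_*$ can contribute nontrivially to the coordinates $r^1, r^2$. This suggests iterating the top-degree argument with respect to the variable $t^{w_*}_{v_*}$: since $w_* \in V_1$, the $\sigma_{w_*}$-twist places the leading $t^2$-entries of $T_{w_*}$ and $T_{w_*}^{-1}$ at the swapped positions $(r^2, r^2)$ and $(r^1, r^1)$, respectively, and this ``opposite parity''---supplied by the bipartite hypothesis on $\bar{\Gamma}$---is precisely what prevents the new top-degree terms from cancelling with those already extracted. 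Provided the choice of $v_*, w_*$ is made so that $w_*$ appears in every $B_j$ (which can be arranged using that $g$ is a cyclically reduced block), an induction on the length of the intervening subwords reduces the problem to a two-syllable base case which can be verified by a direct $2 \times 2$ calculation analogous to the one above.
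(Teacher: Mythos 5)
Your single-variable strategy is attractive but it does not close, and the failure is not a minor technicality — it is exactly the issue the paper's lexicographic degree-vector machinery is built to handle. The formula for the coefficient $c$ of $t^{4m}$ is correct, but showing $c\neq 0$ requires each $(B_j)_{r^2,r^1}$ (and $(B_mB_0)_{r^2,r^1}$) to be nonzero, and your proposed fix does not achieve this. First, the hypothesis that one can choose $v_*,w_*$ with $w_*$ appearing in every gap is asserted but not proved. Second, and more seriously, even when $w_*$ does appear in a gap $B_j$, if it appears twice or more with non-$\{v_*,w_*\}$ syllables in between, the ``iterated top-degree'' argument collapses: by property (c) of Definition~\ref{defJV}, each factor $T_u\lambda_u(\cdot)T_u^{-1}$ with $u\neq v_*,w_*$ fixes $e_{r^1}$ and $e_{r^2}$, so for any such product $C$ one has $(C)_{r^1,r^2}=0$. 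Consequently the top $t^{w_*}_{v_*}$-degree coefficient of $(B_j)_{r^2,r^1}$ contains a factor $E_{r^2,r^1}C_iE_{r^2,r^1}=(C_i)_{r^1,r^2}E_{r^2,r^1}=0$ between any two consecutive $w_*$-syllables, and your claimed leading term vanishes. A concrete cyclically reduced block witnessing both problems is the word with syllable sequence $p,q,p,q,p,s,r,s,r,s$ over $\overline\Gamma$ the $4$-cycle $p$--$q$--$r$--$s$--$p$: for $v_*\in\{p,s\}$ no $w_*$ lies in every gap, and for $v_*\in\{q,r\}$ the only admissible $w_*$ occurs twice in the long gap with non-$\{v_*,w_*\}$ syllables in between.

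The underlying obstruction is that you are isolating the degree in a single variable $t^{v_*}_{w_*}$, whereas the leading monomial of the trace genuinely involves variables attached to a whole closed walk in $\overline\Gamma$ that visits different neighbours of $v_*$ in different gaps. This is why the paper instead tracks, for each vertex $v$, the \emph{total} $\underline t^{v}$-degree (summed over all $w\in\link_{\overline\Gamma}(v)$), orders the resulting degree vectors $D(\bar k)$ lexicographically over a fixed enumeration of $V$, and identifies the unique $\prec$-maximal $\bar k$ with the cycle $K_{|V|}$ built in Section~\ref{section cycles} (Lemmas~\ref{precycle}, \ref{optimality}, \ref{upper bound}, Proposition~\ref{proposition_uniqueness}). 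That combinatorial layer is not optional: it is precisely what lets the argument survive the cases where your two-vertex reduction fails.
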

  Using Proposition \ref{main lemma}, we now give a proof of Theorem \ref{main ring version} and of Corollary \ref{main complex version}.
  \begin{main-thm}
  	\emph{Let $\Gamma=(V,E)$ be a finite graph and $\mathcal{G}=\{G_{v}\}_{v\in V}$ a collection of groups linear over a domain $R$. Then $\Gamma\G$ is linear over the ring of polynomials $R[\underline t]$, for some finite tuple of variables~$\underline t$.}
  \end{main-thm}
  \begin{proof}
  	Using Lemma \ref{non_zero_entries}, as described after Definition \ref{defJV}, it is easy to construct
  	a collection $\{\lambda_{v}\colon G\to \GL_{N}(\R)\}_{v\in V}$ of faithful linear representations admitting a system of well-placed supports.
  	Moreover, by Fact \ref{fact} and Lemma \ref{reduction}, it is sufficient to prove the result for graphs such that $\overline \Gamma$ is bipartite.
  	Let $\lambda\colon G\to \GL_{N}(R[\ut])$ be the homomorphism provided by Lemma \ref{commutation}. To establish injectivity of $\lambda$ it suffices to show that $\lambda(g)\neq e$ for all $g\in \Gamma\mathcal{G}\setminus\{e\}$.
In view of Lemma~\ref{commutation}, we may assume without loss of generality that $g$ consists of a single block.
  	
  	If $g$ is a parabolic element then, up to conjugation, $g\in G_{v}$ for some $v\in V$, and therefore $\lambda(g)\neq e$ because $\lambda_{v}$ is injective. On the other hand, if $g$ is not parabolic then the equality $\lambda(g)=e$ would imply that $tr(\lambda(g))\in \R$. Since the trace is invariant under conjugation, we may assume that $g$ is cyclically reduced, thereby contradicting Proposition~\ref{main lemma}.
  \end{proof}
  
  For a proof of the following fact, see for instance \cite[Lemma~1.2]{pShalen}.
  \begin{lemma}\label{function_field_embedding}
  	Let $\ut$ be a finite tuple of transcendental elements over $\C$. Then, the rational function
  	field $\C(\ut)$ is isomorphic to a subfield of $\C$.
  \end{lemma}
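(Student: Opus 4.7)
The plan is to leverage the fact that $\C$ has uncountable transcendence degree over $\mathbb{Q}$, whereas $\ut$ is only a finite tuple. Writing $n$ for the length of $\ut$, the first step is to fix a transcendence basis $B$ of $\C$ over $\mathbb{Q}$; since $\C$ is uncountable while $\overline{\mathbb{Q}}\subseteq\C$ is countable, $B$ must have cardinality $2^{\aleph_0}$. I would then pick any $n$ distinct elements $b_1,\dots,b_n\in B$ and set $B':=B\setminus\{b_1,\dots,b_n\}$, still of cardinality $2^{\aleph_0}$.

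Next, I would form the subfield $L:=\overline{\mathbb{Q}(B')}\subseteq\C$, meaning the algebraic closure of $\mathbb{Q}(B')$ taken inside $\C$. By construction $L$ is algebraically closed of characteristic zero, with transcendence degree $|B'|=2^{\aleph_0}$ over $\mathbb{Q}$, matching that of $\C$ itself. Steinitz's classification of algebraically closed fields by their characteristic and transcendence degree over the prime subfield then supplies an isomorphism $\psi\colon\C\to L$.

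To build the desired embedding I would observe that since $B$ is algebraically independent over $\mathbb{Q}$, the subtuple $(b_1,\dots,b_n)$ is algebraically independent over $\mathbb{Q}(B')$, and this independence persists over the algebraic closure $L$ (a routine tower-of-transcendence-degrees calculation: adjoining algebraic elements does not change transcendence degree). Hence the assignment $t_i\mapsto b_i$, combined with $\psi$ on the coefficient field, defines a field homomorphism $\C(\ut)\to L(b_1,\dots,b_n)\subseteq\C$, which is automatically injective since the source is a field.

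The whole argument is quite soft: the only mildly delicate point is the transfer of algebraic independence from $\mathbb{Q}(B')$ up to $L$, and everything else amounts to standard bookkeeping with transcendence bases and Steinitz's theorem. There is no real obstacle, which is consistent with the authors treating it as a known fact and citing \cite[Lemma~1.2]{pShalen}.
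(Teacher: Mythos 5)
Your argument is correct. The paper does not actually prove this lemma; it simply refers the reader to Shalen's Lemma~1.2 in \cite{pShalen}, so there is no in-text proof to compare against. The one step you flag as delicate -- that $b_1,\dots,b_n$ remain algebraically independent over $L=\overline{\mathbb{Q}(B')}$ -- does go through, by additivity of transcendence degree in the tower $\mathbb{Q}(B')\subseteq L\subseteq L(b_1,\dots,b_n)$: since $L/\mathbb{Q}(B')$ is algebraic, one gets $\mathrm{trdeg}\bigl(L(b_1,\dots,b_n)/L\bigr)=\mathrm{trdeg}\bigl(L(b_1,\dots,b_n)/\mathbb{Q}(B')\bigr)=n$. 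It is worth noting that the construction of $B'$, $L$, and the auxiliary isomorphism $\psi$ can be bypassed: the algebraic closure $\overline{\C(\ut)}$ is algebraically closed, of characteristic zero, and of transcendence degree $2^{\aleph_0}+n=2^{\aleph_0}$ over $\mathbb{Q}$, hence is isomorphic to $\C$ by Steinitz's theorem, and composing $\C(\ut)\hookrightarrow\overline{\C(\ut)}\cong\C$ already gives the desired embedding. Both versions are the same Steinitz-type argument at heart; the shorter form simply avoids the bookkeeping with the transcendence basis $B$ and the chosen elements $b_i$.
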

  Combining Lemma \ref{function_field_embedding} and Theorem \ref{main ring version}, we obtain:
  
\begin{complex-corollary}
\emph{Let $\Gamma=(V,E)$ be a finite graph and $\mathcal{G}=\{G_{v}\}_{v\in V}$  a collection of groups linear over $\C$. Then $\Gamma\G$ is linear over $\C$.}
  \end{complex-corollary}

\section{Cycles and precycles} \label{section cycles}
In this section we fix a cyclically reduced block, and we develop some tools that will be used in Section~\ref{section_trace} for the proof of Proposition~\ref{main lemma}.

\medskip
By Fact \ref{fact} and Lemma \ref{reduction}, we can assume that the graph $\overline \Gamma$ is bipartite, that is there is a non-trivial partition $(V_{0},V_{1})$ of $V$ such that any edge in $\bar{E}$ connects a vertex from $V_{0}$ to one in $V_{1}$.
  
  Fix now some cyclically reduced block $g\in\Gamma\mathcal{G}$ with reduced normal form
  \begin{equation}\label{eq_normal_form_forg}
  	g=g_{1}g_{2}\cdots g_{m},\qquad g_{l}\in G_{v_{l}}\quad\forall\, l=1,\dots,m.
  \end{equation}
  Notice that two elements $g_i$ and $g_j$ appearing in this reduced expression for $g$ may belong to the same vertex group, that is $v_i=v_j$. Nevertheless, two such occurrences cannot be joined together by successive switches of syllables, because the expression in Equation \eqref{eq_normal_form_forg} is reduced.

  Given an interval $\{1,\dots, m\}$ and two integers $l,l'\in\{1,\dots, m\}$, we define the \emph{cyclic interval} $(l,l')$ to be
  \begin{equation}\label{cyclic_interval}
  	(l,l')=\begin{cases}
  		\{l+1,\dots,l'-1\}&\quad\text{if }\quad l<l';\\
  		\{1,\dots,m\}\setminus\{l\}&\quad\text{if }\quad l=l';\\
  		(1,l')\cup(l,m)&\quad \text{if }\quad l>l'.
  	\end{cases}
  \end{equation}
  Given a sequence $(i_1,\dots,i_k)$ of integer numbers, we say that two entries $i$ and $i'$ are \emph{cyclically consecutive} (in such sequence) if there exists $j\in\{1,\dots,k-1\}$ such that $i=i_j$, $i'=i_{j+1}$, or if $i=i_k$ and $i'=i_1$.
  
  \begin{definition}
  	We call a sequence $\bar{i}$ of indices $i_{1},i_{2},\dots, i_{k}$ in $\{1,\dots, m\}$ a \emph{path} if
  	the sequence of vertices $v_{i_{1}},v_{i_{2}},\dots v_{i_{k}}$ is a path in $\overline{\Gamma}$ and either:
  	\enum{(i)}{
  		\item the sequence $(i_{j})_{j=1}^{k}$ is strictly increasing; or
  		\item there is some $1< j_{0}\leq k$ such that
  		the sequence $(i'_{j})_{j=1}^{k}$ is strictly increasing, where
  		\begin{equation*}
  			i'_{j}=\begin{cases}i_{j}&\qquad\text{if }j<j_{0};\\
  				i_{j}+m&\qquad\text{otherwise.}
  			\end{cases}
  		\end{equation*}
  	}
  	If the sequence of associated vertices, up to cyclic permutation, forms a cycle in $\bar{\Gamma}$, then we say that $\bar{i}$ is a \emph{cycle}.
  	Moreover, we say that $\bar{i}=(i_{1},i_{2},\dots, i_{k})$ is a \emph{precycle} if it can be obtained from a cycle by removing some entries. In these two situations we will implicitly identify two sequences differing only by a cyclic permutation.
  	
  \end{definition}
  Given two precycles $(i_{1},i_{2},\dots,i_{k})$ and $(i'_{1},i'_{2},\dots, i'_{k'})$,
  we say that the latter is a \emph{refinement} of the former if $\{i_{j}\}_{j=1}^{k}$ can be obtained from $\{i'_{j}\}_{j=1}^{k'}$ by removing some entries. Note that, by definition, a precycle which is maximal for refinement is a cycle.

  For $l$ and $l'$ in $\{1,\dots, m\}$, we write $l\sqsubset l'$ if there is a sequence of indices $l=l_{0},l_{1},\dots, l_{k}=l'$ that forms a path.
  The following observation is a consequence of the fact that all cyclic permutations of the word $g_{1}g_{2}\dots g_{m}$ are cyclically reduced.
  \begin{lemma}\label{no immediate repetitions}
  	Let $ l, l'\in\{1,\dots, m\}$. If $v_{l}=v_{l'}$, then $l\sqsubset l'$ and $l'\sqsubset l$.
  \end{lemma}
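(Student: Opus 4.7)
\emph{Proof plan.} The plan is to extract the required paths directly from the reducedness of the appropriate cyclic permutations of the word $g_{1}g_{2}\cdots g_{m}$, invoking nothing beyond the definitions of reducedness and cyclic reducedness.

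If $l=l'$, then $l\sq l$ is trivially witnessed by the length-one sequence $(l)$, so I assume without loss of generality that $l<l'$. For the direction $l\sq l'$, I apply the defining property of reducedness of $g_{1}\cdots g_{m}$ to the pair of positions $(l,l')$: since $v_{l}=v_{l'}$, there must exist some $l<j<l'$ such that $g_{j}$ does not commute with $g_{l}$, that is, $\{v_{j},v_{l}\}\in E_{\overline{\Gamma}}$. Because $v_{l'}=v_{l}$, this same $v_{j}$ is adjacent to $v_{l'}$ in $\overline{\Gamma}$, so the vertex sequence $v_{l},v_{j},v_{l'}$ is an edge-walk in $\overline{\Gamma}$, and the strictly increasing index sequence $(l,j,l')$ meets condition (i) in the definition of a path. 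Hence $l\sq l'$.

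For the opposite direction $l'\sq l$, I exploit the cyclic reducedness of $g$ by passing to the rotation $g':=g_{l+1}g_{l+2}\cdots g_{m}g_{1}\cdots g_{l}$, which is likewise reduced. In $g'$, the syllable $g_{l'}$ sits at position $l'-l$ and $g_{l}$ sits at the last position $m$, with common associated vertex $v_{l}=v_{l'}$. Reducedness of $g'$ therefore produces an intermediate position $p$ with $l'-l<p<m$ whose syllable does not commute with $g_{l'}$. Translating $p$ back to the original indexing yields some $j$ with either $l'<j\le m$ (when $l+p\le m$) or $1\le j<l$ (when $l+p>m$), and in both situations $\{v_{j},v_{l'}\}\in E_{\overline{\Gamma}}$. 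It remains only to verify that the sequence $(l',j,l)$ satisfies condition (ii) of the path definition: in the first case one takes $j_{0}=3$, so that $(l',j,l+m)$ is strictly increasing, while in the second case one takes $j_{0}=2$, so that $(l',j+m,l+m)$ is strictly increasing. This gives $l'\sq l$.

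The only step requiring any care is this last bit of index bookkeeping — matching positions in the rotated word with original positions, and checking that the resulting triple fits the wrap-around clause (ii). Everything else is immediate from the definition of (cyclic) reducedness together with the equivalence between non-commutation of $g_{v},g_{w}$ and adjacency of $\{v,w\}$ in $\overline{\Gamma}$.
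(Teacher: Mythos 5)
Your proof is correct and follows essentially the same strategy as the paper's own argument: reducedness of $g_1\cdots g_m$ supplies a non-commuting intermediate syllable at some $j\in(l,l')$ giving $l\sq l'$, and cyclic reducedness (which you make explicit via rotation, where the paper leaves it implicit) supplies one in the complementary interval giving $l'\sq l$. The one small gloss — treating ``$g_j$ does not commute with $g_l$'' as synonymous with $\{v_j,v_l\}\in E_{\overline\Gamma}$, which silently sets aside the possibility $v_j=v_l$ (fixable by choosing $j$ minimal) — is present in the paper's proof as well, so it is not an objection peculiar to your write-up.
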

  \begin{proof}
  	If $l=l'$ there is nothing to prove.
  	
  	Assume that $l<l'$. As the expression $g=g_1\dots g_m$ is reduced, there exists $l''\in(l,l')$ such that $\{v_{l},v_{l''}\},\{v_{l'},v_{l''}\}\nin E_{\Gamma}$, and therefore $l\sqsubset l'$. Otherwise, it would be possible to permute $g_{l'}$ next to $g_{l}$ and then merge the two syllables.
  	
  	The fact that $g=g_{1}g_{2}\dots g_{m}$ is cyclically reduced implies that there exists $\tilde l\in [1,\dots,l)\sqcup(l',\dots,m]$ such that
  	$\{v_{l},v_{\tilde l}\},\{v_{l'},v_{\tilde l}\}\nin E_{\Gamma}$. Thus we conclude that $l'\sqsubset l$ too.
  \end{proof}

  \begin{remark}\label{singletons are precycles}
  	In particular if $l=l'$ the lemma above can be reformulated as saying that any singleton is a precycle.
  \end{remark}

  \medskip
  Fix an enumeration $\{u_{1},u_{2},\dots ,u_{|V|}\}$ of the vertex set $V_\Gamma$, in such a way that $u_1$ appears in the support of $g$ in Equation \eqref{eq_normal_form_forg}.
  
  For any $1\leq n\leq \lvert V\rvert$, let $L_{n}:=\{l\in\{1,\dots, m\}\mid \,v_{l}=u_{n}\}$, and
  \[M_{n}:=\begin{cases}
  	\bigl\{l\in L_{n+1}\mid \exists\, j\neq j'\in K_n\text{ cyclically consecutive},\ j\sqsubset l\sqsubset j' \bigr\}&\text{if }\lvert K_n\rvert\geqslant 2\\
  	\bigl\{l\in L_{n+1}\mid j\sqsubset l\sqsubset j \bigr\}&\text{if } K_n=\{j\}.
  \end{cases}\]
  Finally, let $K_{n}\subset\{1,\dots, m\}$ be defined as follows:
  \enum{(i)}{
  	\item  $K_{1}=L_{1}$;
  	\item  $K_{n+1}=K_{n}\cup M_{n+1}$.
  }
  For notational purposes, let $M_0=K_0=L_0=\emptyset$.
  
  \medskip
  By the choice of the enumeration of the vertices in $\Gamma$, the vertex $u_1$ appears as one of the vertices for the elements in the reduced expression of $g$ of Equation \eqref{eq_normal_form_forg}. Therefore $L_1\neq \emptyset$.

  The following lemma is trivially true for $n=0$ or $n=1$. For $n\geqslant 2$, it is a consequence of the definition of $K_{n}$ and of the fact that the intersection $K_{n-1}\cap L_n$ is empty.
  \begin{lemma}\label{lemma_equality}
  	$K_{n}\cap L_{n}=M_{n}$ for all $n$.
  \end{lemma}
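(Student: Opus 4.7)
The plan is to prove the identity by induction on $n$. The cases $n\in\{0,1\}$ are settled by direct inspection of the definitions together with the conventions $K_{0}=L_{0}=M_{0}=\emptyset$, exactly as noted in the paragraph just before the statement.

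For $n\geqslant 2$, I would unfold the recursive definition of $K_n$ in the form $K_{n}=K_{n-1}\cup M_{n}$, which yields
\[
K_{n}\cap L_{n}\;=\;(K_{n-1}\cap L_{n})\;\cup\;(M_{n}\cap L_{n}).
\]
The inclusion $M_{n}\subseteq L_{n}$ is built into the definition of $M_{n}$ (it is a subset of the corresponding vertex-layer, by construction), so the second term on the right collapses to $M_{n}$. The lemma therefore reduces to the vanishing of $K_{n-1}\cap L_{n}$.

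That vanishing is the core of the argument, and I would establish it by a short side induction showing that $K_{j}$ is contained in the union $L_{1}\cup L_{2}\cup\dots\cup L_{j}$ of the first $j$ layers. The base case $K_{1}=L_{1}$ is the definition; the inductive step follows from $K_{j}=K_{j-1}\cup M_{j}$ together with the inclusion $M_{j}\subseteq L_{j}$ already used above. Applying this inclusion with $j=n-1$, and using that the sets $L_{i}$ are pairwise disjoint—being the fibres of the map $l\mapsto v_{l}$ over distinct vertices $u_{1},\dots,u_{|V|}$ of $\Gamma$—gives $K_{n-1}\cap L_{n}=\emptyset$, as required.

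I do not anticipate any substantive obstacle: the proof is essentially a bookkeeping argument about how the sets $K_{n}$ are assembled layer by layer from the $L_{i}$, combined with the trivial observation that each index $l\in\{1,\dots,m\}$ lies in exactly one $L_{i}$. The only thing one has to be careful about is matching the indexing convention between the recursive definition of $K_{n}$ and the specification of the layer in which $M_{n}$ lives; once that is set, the argument is essentially forced.
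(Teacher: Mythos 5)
Your proof is correct and matches the paper's own (very terse) argument: the paper reduces the claim for $n\geq 2$ to the observation that $K_{n-1}\cap L_n=\emptyset$, and you simply supply the routine details—$K_n=K_{n-1}\cup M_n$ with $M_n\subseteq L_n$, plus the inductive inclusion $K_{n-1}\subseteq L_1\cup\dots\cup L_{n-1}$ and disjointness of the layers $L_i$. You also correctly spotted and resolved the off-by-one slip in the printed definition of $M_n$ (which nominally reads $M_n\subseteq L_{n+1}$ but must mean $M_n\subseteq L_n$ for the statement, for Lemma~\ref{optimality}, and for Proposition~\ref{proposition_uniqueness} to be consistent).
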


  \begin{lemma}\label{precycle}
  	$K_{n}$ is a precycle for all $n$. Moreover, $K_{\lvert V\rvert}$ is a cycle.
  \end{lemma}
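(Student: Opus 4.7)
My plan is to prove both statements of the lemma simultaneously by induction on $n$, strengthening the inductive hypothesis to the conjunction of (a)~$K_n$ is a precycle, and (b)~for every cycle $\tilde C$ refining $K_n$ and every index $c\in\tilde C$ with $v_c\in\{u_1,\dots,u_n\}$, we have $c\in K_n$. The second assertion of the lemma follows immediately from (b) at $n=|V|$: every vertex of $\Gamma$ appears among $u_1,\dots,u_{|V|}$, so (b) forces any cycle refining $K_{|V|}$ to be contained in $K_{|V|}$, and the equality $\tilde C=K_{|V|}$ makes $K_{|V|}$ itself a cycle.

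The base case $n=1$ is straightforward: if $|L_1|\leqslant 1$ then $K_1=L_1$ is empty or a singleton, hence a precycle by Remark~\ref{singletons are precycles}; if $|L_1|\geqslant 2$, I list $L_1$ in cyclic order and apply Lemma~\ref{no immediate repetitions} to each cyclically consecutive pair to obtain $\overline\Gamma$-paths joining them, whose concatenation gives a cycle refining $K_1$. Clause (b) at $n=1$ is tautological, since $v_c=u_1$ is equivalent to $c\in L_1=K_1$. For the inductive step on clause (a), starting from a cycle $\tilde C_n$ refining $K_n$, I would replace, for each cyclically consecutive pair $(k,k')$ in $K_n$, the arc of $\tilde C_n$ from $k$ to $k'$ with a path $k\sq l_1\sq l_2\sq\cdots\sq l_p\sq k'$, where $l_1,\dots,l_p$ are the elements of $M_{n+1}$ lying in the cyclic interval $(k,k')$ listed in cyclic order. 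The end links $k\sq l_1$ and $l_p\sq k'$ are provided by the definition of $M_{n+1}$, while the intermediate links $l_j\sq l_{j+1}$ follow from Lemma~\ref{no immediate repetitions} applied to $v_{l_j}=v_{l_{j+1}}=u_{n+1}$. Since each sub-path lies in its own cyclic subinterval of $(k,k')$, the concatenation is itself a valid path, and the modified cycle refines $K_{n+1}$.

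For clause (b) in the inductive step, I would fix a cycle $\tilde C$ refining $K_{n+1}$ and an index $c\in\tilde C$ with $v_c\in\{u_1,\dots,u_{n+1}\}$. If $v_c=u_i$ for some $i\leqslant n$, then $\tilde C$ also refines $K_n$ and the inductive hypothesis yields $c\in K_n\subseteq K_{n+1}$. If $v_c=u_{n+1}$, let $a$ be the cyclically closest element of $K_n$ preceding $c$ in $\tilde C$ and $b$ the one following $c$; these exist because $K_n\supseteq K_1=L_1\neq\emptyset$ by the choice of enumeration. The sub-arcs of $\tilde C$ from $a$ to $c$ and from $c$ to $b$ are paths in $\overline\Gamma$ with strictly cyclically increasing indices, witnessing $a\sq c$ and $c\sq b$. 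Since the cyclic order of $\tilde C$ agrees with the one inherited from $\{1,\dots,m\}$, and since $c\notin K_n$ because $K_n\subseteq L_1\cup\cdots\cup L_n$, no element of $K_n$ lies strictly between $a$ and $b$ cyclically; hence $a,b$ are cyclically consecutive in $K_n$ and therefore $c\in M_{n+1}\subseteq K_{n+1}$. The most delicate bookkeeping, and what I expect to be the main obstacle, is treating uniformly the degenerate case $|K_n|=1$ (where $a$ and $b$ coincide and one must invoke the singleton branch of the definition of $M_{n+1}$) and verifying that when several elements of $M_{n+1}$ populate a single cyclic interval of $K_n$, the chained applications of Lemma~\ref{no immediate repetitions} produce paths whose strict-cyclic-increase property is preserved under concatenation.
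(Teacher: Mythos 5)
Your proposal is correct and takes a genuinely different route from the paper on the second assertion. For the precycle claim your argument is essentially the paper's: given a cycle $\tilde C_n$ refining $K_n$, splice in the elements of $M_{n+1}$ between consecutive pairs of $K_n$ using Lemma~\ref{no immediate repetitions} (the paper phrases this as ``$\mathcal{I}_{l,l'}\cup\{l,l'\}$ forms a path $l\sq j_1\sq\dots\sq j_s\sq l'$''). The bookkeeping you flag as delicate does go through, since the witness $l''$ produced in the proof of Lemma~\ref{no immediate repetitions} always lies in the cyclic interval $(l,l')$, so concatenation preserves strict cyclic increase, and the fact that the witnessing pair for $l_1\in M_{n+1}$ must be $(k,k')$ follows from the disjointness of the cyclic intervals between consecutive elements of the precycle $K_n$.

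Where you diverge is the claim that $K_{|V|}$ is a cycle. The paper argues by contradiction: pick $i_\star$ in a cycle refining $K_{|V|}$ but not in $K_{|V|}$, locate it between consecutive entries of $K_{|V|}$, and assert ``by definition of $K_{n+1}$'' that $i_\star$ should have been absorbed. That passage is terse (it conflates two different indices both called $n$, and it does not explain why the two neighbouring entries of $i_\star$ in the cycle are cyclically consecutive in the relevant $K_{j-1}$). Your strengthened inductive invariant~(b) is precisely the statement needed to close this, and your verification of~(b) in the inductive step — taking $a,b$ to be the nearest $K_n$-elements to $c$ in $\tilde C$, reading $a\sq c\sq b$ off the sub-arcs, and observing that $K_n\subseteq\tilde C$ forces $a,b$ to be cyclically consecutive in $K_n$ — is exactly the missing justification. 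So your version is a modest but genuine improvement: it replaces a by-contradiction argument whose key step is left implicit with a direct induction that makes the absorption property explicit, at the cost of a slightly heavier inductive hypothesis. One small remark: you do not need to bifurcate the base case on $|L_1|$, since the argument listing $L_1$ cyclically and chaining Lemma~\ref{no immediate repetitions} also covers $|L_1|=1$ via Remark~\ref{singletons are precycles} (which is where $L_1\neq\emptyset$, guaranteed by the choice of enumeration, is actually used).
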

  \begin{proof}
  	We first prove that $K_{n}$ is a precycle for all $n$. If $K_n$ is a singleton there is nothing to prove. So, suppose that $\lvert K_n\rvert\geqslant 2$.
  	First, notice that $K_1=L_1$ is a precycle, because it is a refinement of the precycle $\bar i=(1,2,\dots,m)$, and the latter can be extended to a cycle as the expression for $g$ appearing in Equation \eqref{eq_normal_form_forg} is reduced.

  	Given two cyclically consecutive indices $l,l'$ in $K_{n}$, consider the collection $\mathcal{I}_{l,l'}=\{j\in L_{n+1}\,|\,l\sq j \sq l'\}$. Lemma \ref{no immediate repetitions} implies
  	$\mathcal{I}_{l,l'}\cup\{l,l'\}$ forms a path $l\sq j_{1}\sq j_{2}\sq\dots\sq j_{s}\sq l'$. Since
  	$K_{n+1}$ is the union of $K_{n}$ and the $\mathcal{I}_{l,l'}$ associated with each of the pairs $l,l'\in K_{n}$ as above, from the assumption that
  	$K_{n}$ is a \precycle it follows that $K_{n+1}$ is one as well. The conclusion follows by induction.

  	\medskip
  	We now prove that $K_{\lvert V\rvert}=(i_1,\dots,i_k)$ is a cycle. Suppose it is not, so that there exists a cycle of which $K_{\lvert V\rvert}$ is a refinement, and let $i_\star$ be an entry of the cycle that is not present in $K_{\lvert V\rvert}$. Suppose that $i_n\sqsubset i_\star \sqsubset i_{n+1} $ for some $n\in \{1,\dots, k\}$, and that $v_{i_\star}\in L_j$ for some $j\in \{1,\dots,m\}$. Therefore, by definition of the set $K_{n+1}$ (and of the set $K_{\lvert V \rvert}$), it would follow that the index $i_\star\in K_{n+1}\subseteq K_{\lvert V\rvert}$, contradicting the assumption that $i_\star\notin K_{\lvert V\rvert}$. Thus we reached a contradiction, and therefore $K_{\lvert V\rvert}$ is a cycle.
  \end{proof}
  
  The precycles $K_{n}$ are \textquotedblleft optimal\textquotedblright\ in the following sense:
  
  \begin{lemma}\label{optimality}
  	Consider $0\leq n\leq \lvert V\rvert-1$, a precycle $J$, and assume that $J\cap (\bigcup_{i=0}^{n}L_{i})=K_{n}$. Then $J\cap L_{n+1}\subseteq M_{n+1}\subseteq K_{n+1}$.
  \end{lemma}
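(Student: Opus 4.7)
The plan is to exploit the ambient cyclic order on $\{1,\dots,m\}$, which is inherited by the precycle $J$, by any cycle $C$ refining $J$, and by $K_n$ alike. A preliminary observation is that a straightforward induction from $K_1=L_1$ and $M_{k+1}\subseteq L_{k+1}$ yields $K_n\subseteq L_1\cup\dots\cup L_n$, so $K_n\cap L_{n+1}=\emptyset$; in particular any $l\in J\cap L_{n+1}$ automatically lies outside $K_n$. The inclusion $M_{n+1}\subseteq K_{n+1}$ is then immediate from the definition $K_{n+1}=K_n\cup M_{n+1}$, so the real task is showing $J\cap L_{n+1}\subseteq M_{n+1}$.

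Given $l\in J\cap L_{n+1}$, I would fix a cycle $C$ refining the precycle $J$ and first treat the case $|K_n|\geq 2$. Let $j$ and $j'$ be the elements of $K_n$ that immediately precede and follow $l$, respectively, in the cyclic order on $\{1,\dots,m\}$. Since $l\notin K_n$ they are distinct and cyclically consecutive in $K_n$, and the hypothesis $K_n=J\cap(\bigcup_{i=0}^n L_i)\subseteq J$ places both of them inside the cycle $C$. The arc of $C$ running forward from $j$ through $l$ is a contiguous subsequence of $C$: its index sequence is cyclically-strictly-increasing, being a piece of the cyclic sequence $C$, and its vertex sequence is a sub-path of the cycle $C$ in $\overline\Gamma$, hence a path in the sense of this section. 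This witnesses $j\sqsubset l$, and the symmetric arc from $l$ to $j'$ gives $l\sqsubset j'$, so $l$ satisfies the defining condition of $M_{n+1}$. The singleton case $K_n=\{j_0\}$ is handled identically, with both half-arcs replaced by the full loop of $C$ from $j_0$ through $l$ back to $j_0$, furnishing $j_0\sqsubset l\sqsubset j_0$. The degenerate case $n=0$ is vacuous, since then $K_0=L_0=\emptyset$ and $K_1=L_1$, so the required inclusion reduces to $J\cap L_1\subseteq L_1$.

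The step that really asks for care is the coherence of the various cyclic orders: the argument silently identifies the cyclic order $K_n$ inherits from $\{1,\dots,m\}$ with the cyclic order of $J$ as a precycle and with the cyclic order of the refining cycle $C$. This identification is forced by the definition of a precycle as a cyclically-strictly-increasing subsequence of a cycle, and it is what guarantees that the pair $(j,j')$ produced above is genuinely cyclically consecutive in $K_n$ in the sense required by the definition of $M_{n+1}$. Once this bookkeeping is in place, the arc-in-$C$ construction delivers the conclusion without further subtlety.
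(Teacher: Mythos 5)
Your proof is correct and follows the same route as the paper's. You identify the cyclically consecutive pair $j,j'\in K_{n}$ flanking $l$, and obtain $j\sqsubset l\sqsubset j'$ from the precycle structure of $J$; the paper compresses this into ``Since $K_{n}\cup\{j\}\subset J$ and $J$ is a precycle, we have $l\sq j\sq l'$,'' while you unpack the same step via a cycle $C$ refining $J$ and its two arcs through $l$. The separate treatment of $\lvert K_{n}\rvert=1$ is also present in the paper, hidden in the convention that the cyclic interval $(l,l)$ is all of $\{1,\dots,m\}\setminus\{l\}$, so your explicit case split and your preliminary observation that $K_{n}\cap L_{n+1}=\emptyset$ are faithful elaborations rather than deviations.
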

  \begin{proof}
  	For $n=0$ the result is clear, since $L_{1}=K_{1}$, and therefore $J\cap L_1\subseteq L_1=M_1=K_1$.
  	
  	So, suppose that $n\geq 1$ and let $j\in J\cap L_{n+1}$. Take cyclically consecutive indices $l,l'\in K_{n}$ such that $j\in(l, l')$, where $(l,l')$ is the cyclic interval defined in Equation \eqref{cyclic_interval}.
  	Since in addition to this $K_{n}\cup\{j\}\subset J$ and $J$ is a precycle, we have $l\sq j\sq l'$. The definition of $K_{n+1}$ then implies that $j\in K_{n+1}$.
  \end{proof}

\section{Traces and degrees}\label{section_trace}
In this final section, we apply the tools developed in Section~\ref{section cycles} to prove that the trace of the matrix $\lambda(g)$ is not in $R$, thus proving Proposition~\ref{main lemma}.

Let
  \begin{equation*}\label{eq_definition_matrix_entries}
  	\lambda(g)=(c_{i,j})
  	\in\GL_N(\R[\underline{t}]),\qquad\lambda_l(g_{l})=(a^{l}_{i,j})
  	\in \GL_{N}(\R),\qquad \lambda_{l}(g_{l})^{T_{v_{l}}}=(\ta_{i,j}^{l})\in \GL_{N}(\R[\underline{t}^{v_l}]).
  \end{equation*}
  We have that
  \begin{equation}\label{eq_diagonal_element}
  	c_{i,i}=\sum_{1\leqslant k_1,\dots,k_{m-1}\leqslant N}
  	\tilde a_{i,k_1}^{1}\tilde a_{k_1,k_2}^{2}\dots \tilde a_{k_{m-1},i}^{m}.
  \end{equation}
  \begin{remark} \label{obs_leading}
  	For any $1\leq i,j\leq n$, the entry $\ta_{i,j}^{l}$ is a polynomial in
  	$R[\ul t^{v}]$ of degree at most four. It has degree exactly four precisely when there exist $w,w'\in \link_{\bar{\Gamma}}(v_{l})$ such that $(i,j)=(r^{\delta_{l}(1)}_{v_{l},w},r^{\delta_{l}(2)}_{v_{l},w'})$ and $a^{l}_{i,j}\neq 0$, where $(\delta_{l}(1),\delta_{l}(2))$ is equal to $(1,2)$ in case $v_{l}\in V_{0}$, and to $(2,1)$ otherwise.
  	In this case the homogeneous component of degree four of $\ta_{i,j}^{l}$ is equal to $a^{l}_{i,j}(t^{v_{l}}_{w})^{2}(t^{v_{l}}_{w'})^{2}$.
  \end{remark}

  Let $\mathcal{X}$ be the collection of all sequences
  $\bar k=(k_{1},\dots, k_{m-1},k_m)\in\{1,\dots, N\}^{m}$
  such that
  $a^{v_l}_{k_{l},k_{l+1}}\neq 0$
  for all $1\leq l\leq m$, where $k_{m+1}$ is intended as $k_1$.
  Given a sequence $\bar{k}\in\{1,2,\dots, N\}^{m}$, let $P(\bar{k}):=\prod_{l=1}^{m}\ta^{l}_{k_{l},k_{l+1}}$. As $R$ is an integral domain, $P(\bar{k})\neq 0$ if and only if $\bar{k}\in\mathcal{X}$, and following Equation \eqref{eq_diagonal_element} we have \begin{equation}\label{equation_trace}
  	tr\bigl(\lambda(g)\bigr)=\sum_{i=1}^Nc_{i,i}=\sum_{\bar{k}\in\mathcal{X}}P(\bar{k}).
  \end{equation}
  Given $v\in V$ and $\bar{k}\in \mathcal{X}$, let $D^{v}(\bar{k})\in\mathbb{N}$ stand for the total degree of the polynomial $\prod_{v_{l}=v}\ta^{l}_{k_{l},k_{l+1}}\in\R[\ut^{v}]$.
  \begin{remark} \label{leading monomial}
  	For any monomial $\mu$ appearing in $P(\bar{k})$ and all $v\in V$ the sum of the exponents of variables of the form $t^{v}_{w}$ in $\mu$, that is the $\ut^{v}$-degree of $\mu$, is at most $D^{v}(\bar{k})$. Moreover, the equality is achieved for at least one of such $\mu$. We refer to any such $\mu$ as a leading monomial of $P(\bar{k})$.
  \end{remark}

  Let $D(\bar k)=\bigl(D_{u_1}(\bar k),\dots, D_{u_m}(\bar k)\bigr)\in \mathbb{N}^{\lvert V\rvert} $.
  We let $\prec$ stand for the lexicographical order on $\mathbb{N}^{|V|}$: we say that $\ul e\prec \ul e'$ if and only if there is $i_{0}\in\{1,2,\dots,\lvert V\rvert\}$ such that
  $e_{i}=e'_{i}$ for all $i<i_{0}$ and
  $e_{i_{0}}<e'_{i_{0}}$.

  \begin{lemma}
  	\label{monomial domination}Let $\bar{k},\bar{k}'\in\mathcal{X}$ and assume that
  	$D(\bar{k}')\prec D(\bar{k})$. Then no leading monomial of $P(\bar{k})$ is a scalar multiple of a monomial from $P(\bar{k}')$.
  \end{lemma}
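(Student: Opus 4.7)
The plan is to compare vertex-indexed multi-degrees directly, exploiting that distinct vertices contribute factors in disjoint groups of variables. Concretely, for every $\bar{k}\in\mathcal{X}$ the product $P(\bar{k}) = \prod_{l=1}^{m}\tilde a^{l}_{k_{l},k_{l+1}}$ factors as $\prod_{v\in V}P^{v}(\bar{k})$, where $P^{v}(\bar{k}) := \prod_{l\,:\,v_{l}=v}\tilde a^{l}_{k_{l},k_{l+1}}\in \R[\ut^{v}]$, since, as recorded in Remark \ref{obs_leading}, each factor $\tilde a^{l}_{k_{l},k_{l+1}}$ only involves the variables $\ut^{v_{l}}$. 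Because the variable groups $\ut^{v}$ for distinct $v$ are pairwise disjoint, every leading monomial $\mu$ of $P(\bar{k})$ in the sense of Remark \ref{leading monomial} has $\ut^{v}$-degree equal to $D^{v}(\bar{k})$ for \emph{every} $v\in V$ simultaneously, and not just for a single choice of $v$.

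With this observation in hand, I would argue by contradiction. Suppose that there exist a nonzero scalar $c\in\R$, a leading monomial $\mu$ of $P(\bar{k})$, and a monomial $\mu'$ appearing in $P(\bar{k}')$ with $\mu = c\mu'$. Then $\mu$ and $\mu'$ carry the same product of variable powers, and in particular the $\ut^{v}$-degree of $\mu'$ agrees with that of $\mu$ for every $v\in V$. The upper-bound part of Remark \ref{leading monomial}, applied to $P(\bar{k}')$, forces this common value to be at most $D^{v}(\bar{k}')$; by the first paragraph it equals $D^{v}(\bar{k})$. Therefore $D^{v}(\bar{k})\le D^{v}(\bar{k}')$ for every $v\in V$. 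This coordinate-wise inequality is incompatible with $D(\bar{k}')\prec D(\bar{k})$ in the lexicographical order, which demands an index $i_{0}$ with $D^{u_{i_{0}}}(\bar{k}')<D^{u_{i_{0}}}(\bar{k})$ (and agreement on earlier coordinates), yielding the desired contradiction.

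I do not foresee any genuine obstacle in this proof: the lemma is essentially a transparent consequence of the factorization of $P(\bar{k})$ across disjoint variable groups $\ut^{v}$, together with the definition of a leading monomial. The only point that deserves care in the write-up is the clean identification of the $\ut^{v}$-degree of a monomial of $P(\bar{k})$ with the corresponding degree of the factor $P^{v}(\bar{k})$, which in turn is what allows the vertex-by-vertex comparison to take place.
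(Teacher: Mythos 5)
Your proof is correct and takes essentially the same route as the paper's, the only difference being presentational: the paper introduces the collapsing ring homomorphism $\psi\colon R[\ut]\to R[\underline{s}]$, $t^v_w\mapsto s_v$, and compares the images $\psi(\mu)$ and $\psi(\mu')$ as monomials in $R[\underline{s}]$, while you compare the $\ut^{v}$-multidegree vectors directly. Both arguments rest on the same two facts (the factorization of $P(\bar{k})$ across the pairwise disjoint variable groups $\ut^{v}$, and the per-vertex degree bound $D^{v}$ from Remark \ref{leading monomial}) and obtain the contradiction with the lexicographic hypothesis $D(\bar{k}')\prec D(\bar{k})$ in the same way; your explicit remark that a leading monomial has $\ut^{v}$-degree exactly $D^{v}(\bar{k})$ for every $v$ simultaneously is the same clarification the paper uses implicitly when it asserts that $\psi$ sends any leading monomial to a scalar multiple of $\prod_{v}s_v^{D^v(\bar{k})}$.
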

  \begin{proof}
  	Let $(s_{v})_{v\in V}$ be a new tuple of variables. The homomorphism $\psi\colon \R[\underline{t}]\to \R[\underline{s}]$ fixing $\R$ and sending $t^{v}_{w}$ to $s_{v}$, for all $w\in \link_{\overline \Gamma}(v)$,
  	sends any leading monomial of $P(\bar{k})$ to a scalar multiple of
  	$\prod_{v\in V}s_{v}^{D^{v}(\bar{k})}$, while it sends any monomial $\nu$ of $P(\bar k')$ to a scalar multiple of $\prod_{v\in V}s_{v}^{e_{v}}$, where $e_v\leq D_{v}(\bar k')$ for all $v\in V$.
  	 This implies $(e_{u_{i}})_{1\leq i\leq |V|}\precneq D(\bar{k})$. This implies that $\psi(\nu)$ is not a scalar multiple of the image by $\psi$ of any leading monomial of $P(\bar{k})$. 
  \end{proof}

  Let $I(\bar{k})\subset\{1,\dots, m\}$ be the collection of all indices $l\in\{1,2,\dots m\}$ such that $\{k_{l},k_{l+1}\}\subseteq J_{v_{l}}$, where indices are taken mod $m$, and order such set from the smallest to the biggest natural number.
  \begin{lemma}\label{jump set}
  	Let $\bar{k}\in\mathcal{X}$. Then $k_{l}=k_{l+1}$ for any $l\nin I(\bar{k})$. Moreover, the set $I(\bar{k})$ is a precycle.
  \end{lemma}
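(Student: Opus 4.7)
For the first assertion, I will unpack the definitions. Membership $\bar{k}\in\mathcal{X}$ gives $a^{l}_{k_{l},k_{l+1}}\neq 0$ for every $l$, while $l\notin I(\bar{k})$ means at least one of $k_{l},k_{l+1}$ lies outside $J_{v_{l}}$. Clause~(ii) of Definition~\ref{defJV} then insists that $\lambda_{v_{l}}(g_{l})_{k_{l},k_{l+1}}=\delta_{k_{l},k_{l+1}}$, which is compatible with $a^{l}_{k_{l},k_{l+1}}\neq 0$ only if $k_{l}=k_{l+1}$.

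For the second assertion, my plan is to list the elements of $I(\bar{k})$ in cyclic order as $l_{1},l_{2},\dots,l_{s}$; the cases $s\leq 1$ are immediately taken care of by Remark~\ref{singletons are precycles}, so assume $s\geq 2$. The pivotal observation will be that, for cyclically consecutive $l_{j},l_{j+1}\in I(\bar{k})$, applying the first assertion to each index strictly between them telescopes to $k_{l_{j}+1}=k_{l_{j}+2}=\dots=k_{l_{j+1}}$. Since $l_{j}\in I(\bar{k})$ forces $k_{l_{j}+1}\in J_{v_{l_{j}}}$ and $l_{j+1}\in I(\bar{k})$ forces $k_{l_{j+1}}\in J_{v_{l_{j+1}}}$, the intersection $J_{v_{l_{j}}}\cap J_{v_{l_{j+1}}}$ contains the common value $k_{l_{j}+1}=k_{l_{j+1}}$, and is in particular non-empty. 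I will then split on whether these two vertices coincide: if $v_{l_{j}}\neq v_{l_{j+1}}$, property~(a) of Definition~\ref{defJV} rules out $\{v_{l_{j}},v_{l_{j+1}}\}\in E_{\Gamma}$, so that $(l_{j},l_{j+1})$ is already a length-one path in the paper's sense; if instead $v_{l_{j}}=v_{l_{j+1}}$, Lemma~\ref{no immediate repetitions} directly supplies a path $l_{j}\sq l_{j+1}$ through intermediate indices. Concatenating these cyclically consecutive paths (collapsing the duplicated endpoints) yields a cyclically strictly increasing sequence of indices whose vertex sequence is a closed walk in $\overline{\Gamma}$, i.e.\ a cycle in the paper's terminology; since $I(\bar{k})$ is a subsequence of it, $I(\bar{k})$ is a precycle by definition.

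The step I expect to be the most delicate is the vertex-coincidence case $v_{l_{j}}=v_{l_{j+1}}$: because $\overline{\Gamma}$ is a simple graph with no loops, Definition~\ref{defJV} on its own cannot directly produce an $\overline{\Gamma}$-edge between two equal vertices, and this is exactly where cyclic reducedness of $g$ must enter, through Lemma~\ref{no immediate repetitions}. Everything else is routine bookkeeping against clauses~(a) and~(ii) of Definition~\ref{defJV}, together with the fact that the indices strictly between two cyclically consecutive elements of $I(\bar{k})$ are, by construction, outside $I(\bar{k})$.
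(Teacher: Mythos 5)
Your proof is correct and follows essentially the same route as the paper's: the first assertion via Definition~\ref{defJV}(ii), and the second by telescoping equal $k$-values between consecutive members of $I(\bar k)$ to land a common index in $J_{v_{l_j}}\cap J_{v_{l_{j+1}}}$, then splitting on $v_{l_j}=v_{l_{j+1}}$ (invoking Lemma~\ref{no immediate repetitions}) versus $v_{l_j}\neq v_{l_{j+1}}$ (a direct $\overline\Gamma$-edge from Definition~\ref{defJV}(a)). The paper records this more tersely — stating both cases give $l\sq l'$ and leaving implicit that these paths concatenate to a cycle having $I(\bar k)$ as a subsequence — so yours is a slightly more explicit account of the same argument.
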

  \begin{proof}
  	The first statement is immediate from the fact that for any $1\leq j\leq m$ and $k,k'\in \{1,\dots,N\}$ either $\tilde{a}^{j}_{k,k'}=0$ or $k=k'\nin J_{v_{j}}$ or $k,k'\in J_{v_{j}}$.  Given cyclically consecutive indices $l$ and $l'$ in $I(\bar{k})$ we thus have $k_{l'}=k_{l+1}\in J_{v_{l}}\cap J_{v_{l'}}\neq\emptyset$, which implies that
  	either $v_{l}=v_{l'}$ or $\{v_{l},v_{l'}\}\in\bar{E}$ and hence that $l\sq l'$ either by Lemma \ref{no immediate repetitions}
  	or by definition $l\sq l'$.
  \end{proof}

  The following lemma provides algebraic meaning to the combinatorial notions of Section \ref{section cycles}:
  \begin{lemma}\label{upper bound}
  	The following statements hold:
  	\enum{(a)}{
  		\item \label{item1} For any $\bar{k}\in\mathcal{X}$ and for any $1\leq i\leq |V|$, we have that  $\lvert D_{u_{i}}(\bar{k})\rvert \leq 4\bigl\lvert L_{i}\cap I(\bar{k})\bigr\rvert$.
  		\item \label{item2} Let $I\subset\{1,\dots, m\}$ be a cycle. Then there exist a unique $\bar{k}\in\mathcal{X}$ such that $I(\bar{k})=I$ and $D_{u_{i}}(\bar{k})= 4\lvert L_{i}\cap I\rvert$ for all $1\leq i\leq |V|$.
  	}
  \end{lemma}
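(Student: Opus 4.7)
The plan splits naturally along the two statements.

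\textbf{For part (a),} I will use Remark~\ref{obs_leading} to bound each factor $\ta^{l}_{k_l, k_{l+1}}$ of the product defining $D_{u_i}(\bar{k})$ by a polynomial in $R[\ut^{u_i}]$ of degree at most $4$, and then argue that factors with $l \in L_i \setminus I(\bar k)$ actually lie in $R$. By Lemma~\ref{jump set}, $l \notin I(\bar k)$ forces $k_l = k_{l+1}$, and since $\{k_l\} \not\subseteq J_{v_l}$ the common index lies in $\overline{J_{v_l}}$. The matrix $T_{v_l}$ restricts to the identity on the complement of $\bigcup_w \{r^1_{v_l, w}, r^2_{v_l, w}\} \subseteq J_{v_l}$, and by condition (ii) of Definition~\ref{defJV} so does $\lambda_{v_l}(g_l)$ on $\overline{J_{v_l}}$; consequently $\ta^{l}_{k_l, k_l} = 1$ contributes $0$ to the degree. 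Summing over $l \in L_i$ gives the claim.

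\textbf{For part (b),} I will construct $\bar k$ explicitly. Writing $I = (i_1, \dots, i_s)$ in cyclic order, I declare $\bar k$ to be constant on each segment between cyclically consecutive elements of $I$: for $l \in \{i_j + 1, \dots, i_{j+1}\}$ I set
\[
k_l := r^{\delta_{i_j}(2)}_{v_{i_j}, v_{i_{j+1}}}.
\]
Since $\overline \Gamma$ is bipartite and consecutive cycle vertices lie on opposite sides of $V_0 \sqcup V_1$, the identity $\delta_{i_j}(2) = \delta_{i_{j+1}}(1)$ holds; combined with the symmetry of $r^\delta_{u,v}$ in $u, v$, this makes the definition consistent at each junction. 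Remark~\ref{obs_leading} then shows each $\ta^{i_j}_{k_{i_j}, k_{i_j+1}}$ attains degree $4$, yielding $D_{u_i}(\bar k) = 4 \lvert L_i \cap I\rvert$. For the condition $\bar k \in \mathcal X$: when $l \in I$ we have $k_l \neq k_{l+1}$ both in $J_{v_l}$, so (i) of Definition~\ref{defJV} applies; when $l \notin I$ the entry is diagonal, so (ii) gives $1$.

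Next I will verify $I(\bar k) = I$. The inclusion $I \subseteq I(\bar k)$ is immediate. Conversely, for $l \notin I$ strictly between $i_j$ and $i_{j+1}$, the common value $k_l$ belongs to $J_{v_{i_j}} \cap J_{v_{i_{j+1}}}$; property (c) of Definition~\ref{defJV}, applied to the distinct triple $(v_l, v_{i_j}, v_{i_{j+1}})$, forces $k_l \notin J_{v_l}$, so $l \notin I(\bar k)$. Uniqueness will follow by reversing the construction: any $\bar k$ meeting the degree condition has, by Remark~\ref{obs_leading}, each pair $(k_{i_j}, k_{i_j+1})$ of the form $(r^{\delta_{i_j}(1)}_{v_{i_j}, w}, r^{\delta_{i_j}(2)}_{v_{i_j}, w'})$ for some $w, w' \in \link_{\overline \Gamma}(v_{i_j})$, and the constancy forced on non-$I$ segments by Lemma~\ref{jump set}, combined with a further application of (c), pins $w, w'$ down to the neighbouring cycle vertices.

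The main obstacle will be precisely the equality $I(\bar k) = I$ in part (b): property (c) is the indispensable ingredient that prevents the forced value $k_l \in J_{v_{i_j}} \cap J_{v_{i_{j+1}}}$ for $l$ strictly between two elements of $I$ from accidentally lying in $J_{v_l}$ and thereby placing $l$ in $I(\bar k)$.
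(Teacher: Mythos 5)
Your proof follows the same route as the paper's but is actually more careful in one respect: the paper verifies $\bar k^*\in\mathcal{X}$ and $D_{u_i}(\bar k^*)=4\lvert L_i\cap I\rvert$ but never explicitly checks $I(\bar k^*)=I$, whereas you do, correctly identifying condition~\eqref{condc} of Definition~\ref{defJV} as the ingredient that prevents the constant value $k_l\in J_{v_{i_j}}\cap J_{v_{i_{j+1}}}$ from lying in $J_{v_l}$. Also, your formula making $\bar k$ constant on $\{i_j+1,\dots,i_{j+1}\}$ is the right one; the paper's displayed interval $[l_j,l_{j+1})$ would force $k^*_{l_j}=k^*_{l_j+1}$ whenever the segment has length $\geq 2$, ruining degree four at $l_j$ (this looks like an off-by-one typo in the paper, which also seems to intend the symmetric convention $r^\delta_{u,v}=r^\delta_{v,u}$ you use, not the incompatible equalities it literally writes). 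Your argument for part (a) matches the paper's but again spells out a step the paper leaves implicit, namely that factors $\ta^l_{k_l,k_l}$ with $l\notin I(\bar k)$ equal $1$ because both $T_{v_l}$ and $\lambda_{v_l}(g_l)$ are the identity off $J_{v_l}$.

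One point you should flag: when you apply condition~\eqref{condc} to ``the distinct triple $(v_l,v_{i_j},v_{i_{j+1}})$'', you need to know that $v_l$ differs from both $v_{i_j}$ and $v_{i_{j+1}}$ for $l$ strictly between them. Since the cycle $K_{\lvert V\rvert}$ that actually arises in the application has repeated vertices (indeed $K_1=L_1$ is a set of indices all carrying the same vertex $u_1$), this distinctness is not automatic from the cycle being a ``cycle'' in the paper's sense of a closed walk; if $v_l=v_{i_j}$, then $k_l\in J_{v_l}$ and $l$ lands in $I(\bar k)$, breaking the equality. Lemma~\ref{no immediate repetitions} gives $i_j\sqsubset l\sqsubset i_{j+1}$ in that case, which together with the maximality of the cycle for refinement should rule the scenario out, but this needs to be said. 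Since the paper omits the $I(\bar k^*)=I$ check entirely, this is a shared subtlety rather than a flaw unique to your write-up, but a complete proof should close it.
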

  \begin{proof}
  	For any $1\leq l\leq m$, if $u_{i}\neq v_{l}$ then $D^{u_{i}}(\ta_{k_{l},k_{l+1}})=0$, while in case $u_{i}=v_{l}$ the inequality $D_{v_{l}}(\ta_{k_{l},k_{l+1}})\leq 4$ holds. Condition $\eqref{item1}$ immediately follows.
  	
  	It remains to check Condition \eqref{item2}. Fix a cycle $I$,  which we enumerate as $l_{1}<l_{2}<\dots< l_{q}$.
  	Assume that we are given $\bar{k}\in\mathcal{X}$ such that $I(\bar{k})=I$ and the equality $D_{u_{i}}(\bar{k})=4\lvert L_{i}\cap I\rvert$ is achieved for every $1\leq i\leq \lvert V\rvert$.
  	
  	This can only take place if $D_{v_{l_{j}}}(\bar{k})=4$ for all $1\leq j\leq q$, which by virtue of Remark \ref{obs_leading} is equivalent to the existence of $w_{j},w'_{j}\in \link_{\overline\Gamma}(u_{l_{j}})$, for all $1\leq j\leq q$, such that
  	\begin{equation}\label{equation_klkl1}
  		(k_{l_{j}},k_{l_{j}+1})=\begin{cases}
  			(r^{1}_{v_{l_{j}},w_{j}},r^{2}_{v_{l_{j}},w'_{j}})&\quad\text{if }v_{l_{j}}\in V_{0};\\
  			(r^{2}_{v_{l_{j}},w_{j}},r^{1}_{v_{l_{j}},w'_{j}})&\quad\text{if }v_{l_{j}}\in V_{1}.
  		\end{cases}
  	\end{equation}
  	Recall that by Lemma \ref{jump set} we have $k_{l}=k_{l+1}$ for any $l\nin I(\bar{k})$. In particular, $k_{l+1}=k_{l'}$ for cyclically consecutive $l\sq l'$ in $I$, and thus Equation \eqref{equation_klkl1} can only hold for $l_{j-1},l_{j},l_{j+1}$ (notice that this triple might contain repetitions) if $w_{j}=v_{l_{j-1}}$ and $w'_{j}=v_{l_{j+1}}$.
  	It follows that the equalities $D_{u_{i}}(\bar{k})=4\lvert L_{i}\cap I(\bar{k})\rvert$ for all $1\leq i\leq m$ can take place only for a unique value $\bar{k}=\bar{k}^{*}$ defined as follows. For any $1\leq j\leq q$ and any $i$ in the (cyclic) interval $[l_{j},l_{j+1})$, where $j+1$ is taken mod $q$, we have:
  	\begin{equation}
  		k^{*}_{i}=\begin{cases}
  			r^{1}_{v_{l_{j}},v_{l_{j+1}}}&\quad\text{if }v_{l_{j}}\in V_{0};\\
  			r^{2}_{v_{l_{j}},v_{l_{j+1}}}&\quad\text{if }v_{l_{j}}\in V_{1}.
  		\end{cases}
  	\end{equation}
Notice that this assignment is consistent since $l_{1},l_{2},\dots, l_{q}$ is a cycle, and therefore, $\overline\Gamma$ being bipartite, must alternate between vertices in $V_0$ and $V_1$.
  	Clearly $\bar{k}^{*}\in\mathcal{X}$, since for any value of $i$ either $k^{*}_{i}=k^{*}_{i+1}$ or both $k_{i}^{*}$ and $k^{*}_{i+1}$ lie in $J_{v}$ for some $v$. The equalities $D_{u_{i}}(\bar{k}^{*})=4\lvert L_{i}\cap I\rvert$ are satisfied for all $1\leq i\leq m$ because the sequence $(v_{l_{j}})_{1\leq j\leq q}$ must alternate between $V_{0}$ and $V_{1}$, as $\bar{\Gamma}$ is bipartite.
  \end{proof}

  \begin{proposition}\label{proposition_uniqueness}
  	Let $\bar{k}\in\mathcal{X}$ with $D(\bar{k})$ $\prec$-maximal. Then $I(\bar{k})=K_{\lvert V\rvert}$ and $D(\bar{k})\neq 0$. In particular, there is a unique $\bar{k}\in\mathcal{X}$ with $D(\bar{k})$ $\prec$-maximal.
  \end{proposition}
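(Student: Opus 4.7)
The plan is to compare an arbitrary $\prec$-maximal $\bar k\in\mathcal{X}$ against a canonical candidate $\bar k^{*}\in\mathcal{X}$ realizing $I(\bar k^{*})=K_{|V|}$, then show that maximality forces coordinate-by-coordinate agreement in $D$, and ultimately $I(\bar k)=K_{|V|}$.

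First, I would produce the candidate $\bar k^{*}$. Since $K_{|V|}$ is a cycle by Lemma~\ref{precycle}, Lemma~\ref{upper bound}\eqref{item2} gives a unique $\bar k^{*}\in\mathcal{X}$ with $I(\bar k^{*})=K_{|V|}$ and $D_{u_{i}}(\bar k^{*})=4|L_{i}\cap K_{|V|}|$ for every $i$. Using that $K_{|V|}=L_{1}\cup M_{2}\cup\dots\cup M_{|V|}$, that the $L_{i}$ are pairwise disjoint, and that $M_{j}\subseteq L_{j}$, this simplifies to $L_{1}\cap K_{|V|}=L_{1}$ and $L_{i}\cap K_{|V|}=M_{i}$ for $i\geq 2$. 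Setting the convention $M_{1}:=L_{1}$, we get $D_{u_{i}}(\bar k^{*})=4|M_{i}|$ for all $i$.

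Now I would let $\bar k\in\mathcal{X}$ be $\prec$-maximal and prove by induction on $n$ that $I(\bar k)\cap L_{i}=M_{i}$ (hence $D_{u_{i}}(\bar k)=4|M_{i}|=D_{u_{i}}(\bar k^{*})$) for all $i\leq n$. By Lemma~\ref{jump set}, $I(\bar k)$ is a precycle, which is what allows Lemma~\ref{optimality} to be invoked. For $n=1$: Lemma~\ref{upper bound}\eqref{item1} gives $D_{u_{1}}(\bar k)\leq 4|L_{1}|=D_{u_{1}}(\bar k^{*})$, while $\prec$-maximality gives $D_{u_{1}}(\bar k)\geq D_{u_{1}}(\bar k^{*})$; equality forces $L_{1}\subseteq I(\bar k)$, so $I(\bar k)\cap L_{1}=L_{1}=K_{1}$. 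For the inductive step, assuming $I(\bar k)\cap\bigl(L_{1}\cup\dots\cup L_{n}\bigr)=K_{n}$, Lemma~\ref{optimality} yields $I(\bar k)\cap L_{n+1}\subseteq M_{n+1}$, hence $D_{u_{n+1}}(\bar k)\leq 4|M_{n+1}|=D_{u_{n+1}}(\bar k^{*})$. The induction hypothesis says the first $n$ coordinates of $D(\bar k)$ and $D(\bar k^{*})$ agree, so $\prec$-maximality forces equality in the $(n+1)$-st coordinate and thus $I(\bar k)\cap L_{n+1}=M_{n+1}$.

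After running the induction up to $n=|V|$, I would conclude $I(\bar k)=\bigcup_{i=1}^{|V|}(I(\bar k)\cap L_{i})=K_{|V|}$ and $D(\bar k)=D(\bar k^{*})$. The nonvanishing of $D(\bar k)$ is immediate from $D_{u_{1}}(\bar k)=4|L_{1}|>0$, which holds because the enumeration of $V$ was chosen so that $u_{1}\in\supp(g)$, forcing $L_{1}\neq\emptyset$. Finally, uniqueness of the $\prec$-maximal $\bar k$ comes for free from the uniqueness clause in Lemma~\ref{upper bound}\eqref{item2}, since $\bar k$ necessarily equals the canonical $\bar k^{*}$. The only genuinely delicate point in the argument is ensuring that the induction hypothesis hands Lemma~\ref{optimality} exactly the intersection $K_{n}$ it requires; this relies on Lemma~\ref{lemma_equality} and on the disjointness of the $L_{i}$, and is what makes the combinatorial setup of Section~\ref{section cycles} align cleanly with the degree estimates of Section~\ref{section_trace}.
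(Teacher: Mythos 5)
Your proof is correct and follows essentially the same strategy as the paper's: induct on $n$ to show $I(\bar k)\cap L_i=M_i$ (equivalently, $I(\bar k)\cap\bigcup_{i\leq n}L_i=K_n$), using Lemma~\ref{optimality} together with $\prec$-maximality to squeeze each coordinate of $D(\bar k)$ against $D(\bar k^*)$, then invoke Lemma~\ref{upper bound}\eqref{item2} for uniqueness. The only differences are cosmetic (you introduce the comparison candidate $\bar k^*$ up front and start the induction at $n=1$ rather than $n=0$).
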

  \begin{proof}
  	We prove, by induction on $n$, that $I(\bar{k})\cap(\bigcup_{i=0}^{n}L_{i})=K_{n}$.
  	For $n=0$ the result is trivial, as $I(\bar{k})\cap(\bigcup_{i=1}^{0}L_{i})=\emptyset=K_0$.
  	Suppose the result is satisfied for $n>0$.
  	By Lemma \ref{jump set} we know that $I(\bar{k})$ is a precycle, which together with the induction hypothesis $I(\bar{k})\cap(\bigcup_{i=0}^{n}L_{i})=K_{n}$ and Lemma \ref{optimality} implies that $I(\bar{k})\cap L_{n+1}\subseteq K_{n+1}$, and therefore that
  	\begin{equation}\label{eq_inclusion}
  		I(\bar{k})\cap L_{n+1}\subseteq M_{n+1}=K_{n+1}\cap L_{n+1}.
  	\end{equation}
  	Therefore, by Lemma \ref{upper bound}, for any $1\leq i\leq n+1$ we have that
  	\begin{equation}\label{last_equation}
  		D_{u_{i}}(\bar{k})\leq 4\lvert I(\bar{k})\cap L_{i}\rvert\leq 4\lvert  K_{n+1}\cap L_{i}\rvert\leq D_{u_{i}}(\bar{k}_0),
  	\end{equation}
  	where $\bar{k}_0$ is the unique element in $\mathcal{X}$ associated to the cycle $K_{\lvert V\rvert}$ by Lemma \ref{upper bound}\eqref{item2}.
  	As $D(\bar{k})$ is $\prec$-maximal, the inequalities of Equation \eqref{last_equation} must in fact be equalities, and therefore $\lvert I(\bar k)\cap L_{n+1}\rvert=\lvert  K_{n+1}\cap L_{n+1}\rvert$. As this cardinality is finite, Equation \eqref{eq_inclusion} implies that
  	$I(\bar{k})\cap L_{n+1}= K_{n+1}\cap L_{n+1}$. As $K_{n+1}\cap L_{n+1}=M_{n+1}$ by Lemma \ref{lemma_equality}, we have that $I(\bar{k})\cap L_{n+1}= M_{n+1}$. Therefore
  	\begin{equation*}
  		\begin{split}
  			I(\bar k)\cap\Bigl(\bigcup_{i=0}^{n+1}L_i\Bigr)&=\Bigl(I(\bar k)\cap\bigl(\bigcup_{i=0}^{n}L_i\bigr)\Bigr)\cup \Bigl(I(\bar k)\cap K_{n+1}\Bigl)\\
  			&=K_n\cup M_{n+1}=K_{n+1},
  		\end{split}
  	\end{equation*}
  	and the induction is completed.
  	As $I(\bar{k})\cap \Bigl(\bigcup_{i=0}^{\lvert V\rvert}L_{i}\Bigr)=I(\bar{k})$, applying the just-proved equality to $n=\lvert V\rvert$ we conclude that $I(\bar{k})=K_{\lvert V\rvert}$.

  	\smallskip
  	By Lemma \ref{precycle} we know that $K_{\lvert V\rvert}$ is a cycle.
  	Therefore, by Lemma \ref{upper bound}\eqref{item2} such a $\bar k\in\mathcal{X}$ is unique.
  	Since $D_{u_1}(\bar k)=4\lvert I(\bar k)\cap L_1\rvert\neq 0$,
  	the corresponding monomial is not a constant.
  \end{proof}

  We are now ready to prove Proposition \ref{main lemma}.
  \begin{thmA}
  	\emph{Given a graph $\Gamma$ with $\overline\Gamma$ bipartite and any non-parabolic block $g\in\Gamma\mathcal{G}$, we have $tr\bigl(\lambda(g)\bigr)\notin \R$.}
  \end{thmA}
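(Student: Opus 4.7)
The plan is to pinpoint a single non-constant monomial in the expansion of $tr(\lambda(g))$ and to verify that its total coefficient, after summing contributions over all $\bar k \in \mathcal{X}$, remains non-zero. Starting from the trace formula \eqref{equation_trace}, I would write $tr(\lambda(g)) = \sum_{\bar k \in \mathcal{X}} P(\bar k)$ and let $\bar k^\star$ denote the unique element of $\mathcal{X}$ whose multi-degree $D(\bar k^\star)$ is $\prec$-maximal, as supplied by Proposition \ref{proposition_uniqueness}. That proposition simultaneously guarantees that $I(\bar k^\star) = K_{\lvert V \rvert}$ is a genuine cycle (Lemma \ref{precycle}) and that $D(\bar k^\star) \neq 0$, so any monomial of multi-degree $D(\bar k^\star)$ is automatically non-constant.

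The core step is to extract from $P(\bar k^\star)$ a distinguished leading monomial $\mu^\star$. I would split the product $P(\bar k^\star) = \prod_{l=1}^m \tilde a^l_{k^\star_l, k^\star_{l+1}}$ according to whether $l \in I(\bar k^\star)$. For $l \nin I(\bar k^\star)$, Lemma \ref{jump set} together with the block-diagonal description of $\widetilde\lambda_{v_l}(g_l)$ recorded inside the proof of Lemma \ref{commutation} forces the corresponding factor to be $1$. For $l \in I(\bar k^\star)$ the explicit formula for $\bar k^\star$ derived in the proof of Lemma \ref{upper bound}\eqref{item2} places $(k^\star_l, k^\star_{l+1})$ at a pair of the form $(r^{\delta}_{v_l, w_l}, r^{\epsilon}_{v_l, w'_l})$ with $w_l, w'_l \in \link_{\overline\Gamma}(v_l)$, so that Remark \ref{obs_leading} supplies a unique degree-four leading term $a^l_{k^\star_l, k^\star_{l+1}} (t^{v_l}_{w_l})^2 (t^{v_l}_{w'_l})^2$. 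Multiplying these leading terms produces a single monomial $\mu^\star$ of multi-degree exactly $D(\bar k^\star)$, whose coefficient $\prod_{l \in I(\bar k^\star)} a^l_{k^\star_l, k^\star_{l+1}}$ is non-zero since $R$ is a domain and each factor is non-zero by definition of $\mathcal{X}$.

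To close the argument, I would invoke Lemma \ref{monomial domination}: for every other $\bar k' \in \mathcal{X}$, the uniqueness statement in Proposition \ref{proposition_uniqueness} yields $D(\bar k') \prec D(\bar k^\star)$ strictly, hence no monomial appearing in $P(\bar k')$ can be a scalar multiple of $\mu^\star$. Consequently the coefficient of $\mu^\star$ in $tr(\lambda(g))$ coincides with its coefficient in $P(\bar k^\star)$ alone and is therefore non-zero, so $tr(\lambda(g))$ contains a non-constant monomial and does not lie in $R$.

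The main obstacle I anticipate is checking that $\mu^\star$ is genuinely a single monomial, i.e.\ that no internal cancellation occurs inside $P(\bar k^\star)$ at its top multi-degree. This relies on the sharp degree-four statement in Remark \ref{obs_leading}, combined with the precise prescription of $(k^\star_l, k^\star_{l+1})$ from Lemma \ref{upper bound}\eqref{item2}: one has to comb through once to confirm that the indices are compatible with the bipartite alternation of $V_0$ and $V_1$ along the cycle $K_{\lvert V\rvert}$, which is precisely the role the bipartiteness hypothesis on $\overline\Gamma$ will play in the argument.
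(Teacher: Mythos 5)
Your proposal is correct and takes essentially the same route as the paper: single out the unique $\prec$-maximal $\bar k^\star$ from Proposition \ref{proposition_uniqueness}, note that its leading monomial is non-constant since $D(\bar k^\star)\neq 0$, and apply Lemma \ref{monomial domination} together with the uniqueness of $\bar k^\star$ to rule out cancellation from any other $P(\bar k')$. Your explicit construction of $\mu^\star$ via Remark \ref{obs_leading} and Lemma \ref{upper bound}\eqref{item2} simply spells out in detail why the leading coefficient is a non-zero product over the domain $R$, a point the paper leaves implicit in Remark \ref{leading monomial}.
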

  \begin{proof}
  	Let $g\in \Gamma\mathcal{G}$ be a non-parabolic, single block element. As per Equation~\eqref{equation_trace},  $tr\bigl(\lambda(g)\bigr)=\sum_{\bar k\in \mathcal{X}}P(\bar k)$. By Proposition \ref{proposition_uniqueness} there exists a unique $\bar{k}^{*}\in\mathcal{X}$ with $D(\bar{k}^{*})$ $\prec$-maximal among the values of $D$ on $\mathcal{X}$.
  	Notice that, if $\mu$ denotes the leading monomial of $P(\bar k)$, then $\mu\notin R$ since $D(\bar{k}^{*})\neq\bar 0$.
  	
  	Uniqueness of $\bar k^*$, together with Lemma \ref{monomial domination}, implies that
  	for any $\bar{k}\in\mathcal{X}\setminus\{\bar{k}^{*}\}$
  	the polynomial $P(\bar{k})$ cannot contain any non-zero scalar multiple of $\mu$.  Therefore, $\mu$ does not cancel in the expression $\sum_{\bar k\in \mathcal{X}}P(\bar k)$ and thus $tr\bigl(\lambda(g)\bigr)\notin \R$.
  \end{proof}
  \begin{remark}
  	One can show that $tr(\lambda(g))\notin R$ for any non-parabolic $g$, even when it decomposes as a product of more than one block.
  \end{remark}


\begin{thebibliography}{99}
  	\raggedright
  	
  	\bibitem{BMR} G. Baumslag, A. Myasnikov, V. Remeslennikov, \emph{Algebraic geometry over groups. I. Algebraic sets and ideal theory}. J. Algebra 219 (1999), no. 1, 16~--~79;
  	
  	\bibitem{B} R. M. Bryant, \emph{The verbal topology of a group}. J. Algebra 48 (1977), no. 2, 340~--~346; 
  	
  	\bibitem{Goda} K. M. Goda, \emph{Centralisers in graph products of groups}. Ph.D. Thesis, University of Newcastle, 2006;
  	
  	\bibitem{green} E. R. Green, \emph{Graph products of groups}. PhD Thesis, University of Leeds, 1990;
  	
  	\bibitem{Lub} A. Lubotzky, \emph{A group theoretic characterization of linear groups}. J. Algebra 113 (1988), no. 1, 207~--~214;
  	
  	\bibitem{hsu.wise} T. Hsu, D. Wise, \emph{On linear and residual properties of graph products}. Michigan Math. J. 46 (1999), no. 2, 251~--~259;
  	
  	\bibitem{humphries} S. P. Humphries, \emph{On representations of Artin groups and the Tits conjecture}. J. Algebra 169 (1994), no. 3, 847~--~862;
  	
  	\bibitem{malcev} A. Mal'cev, \emph{On isomorphic matrix representations of infinite groups}. Rec. Math. [Mat. Sbornik] N.S. 8 (50), (1940). 405~--~422;
  	
  	\bibitem{M} Z. S. Marciniak, \emph{A note on free products of linear groups}. Proc. Amer. Math. Soc. 94 (1985), no. 1, 46~--~48;
  	
  	\bibitem{MRV} V. Metaftsis, E. Raptis, D. Varsos, \emph{On the linearity of HNN-extensions with abelian base}. J. Pure Appl. Algebra 216 (2012), no. 5, 997~--~1003;
  	
  	\bibitem{Minty} R. J. H. Minty, \emph{Faithful representations of free products}. J. London Math. Soc. (2) 56 (1997), no. 1, 137~--~148; 
  	
  	\bibitem{pShalen} P. Shalen, \emph{Linear representations of certain amalgamated products}. Journal of Pure and Applied Algebra, 15, Issue 2, July 1979, 187~--~197;
  	
  	\bibitem{Valiunas} M. Valiunas, \emph{Acylindrical hyperbolicity of groups acting on quasi-median graphs and equations in graph products}. ArXiv preprints (2019): \url{https://arxiv.org/abs/1811.02975};
  	
  	\bibitem{Wehrfritz} B. A. F. Wehrfritz, \emph{Generalized free products of linear groups}. Proc. London Math. Soc. (3) 27 (1973), 402~--~424.
  \end{thebibliography}
\end{document}